\newcommand{\citecomment}[2][]{\citen{#2}#1\citevar}
\newcommand{\citeone}[1]{\citecomment{#1}}
\newcommand{\citetwo}[2][]{\citecomment[,~#1]{#2}}
\newcommand{\citevar}{\@ifnextchar\bgroup{;~\citeone}{\@ifnextchar[{;~\citetwo}{]}}}
\newcommand{\citefirst}{\@ifnextchar\bgroup{\citeone}{\@ifnextchar[{\citetwo}{]}}}
\newcommand{\cites}{[\citefirst}
	\tikzstyle{every picture}=[scale=.35,inner sep=0]
\newtheorem{thm}{Theorem}
\newtheorem{cor}{Corollary}
\newtheorem{theorem}{Theorem}[section]
\newtheorem{lemma}[theorem]{Lemma}
\newtheorem{proposition}[theorem]{Proposition}
\newtheorem{corollary}[theorem]{Corollary}
\theoremstyle{definition}
\theoremstyle{remark}
\numberwithin{equation}{section}
\newcommand{\M}{\mathcal{M}}
\newcommand{\Q}{\mathbb{Q}}
\newcommand{\Z}{\mathbb{Z}}
\newcommand{\ra}{\rightarrow}
\renewcommand{\P}{\mathbb{P}}
\newcommand{\E}{\mathbb{E}}
\newcommand{\calX}{\mathcal{X}}
\renewcommand{\H}{\mathbb{H}}
\newcommand{\W}{\mathbb{W}}
\newcommand{\ov}{\overline}
\newcommand{\Hg}{\P\E^k_{g}}
\begin{document}

\title[$k$-canonical divisors through Brill-Noether special points]{$k$-canonical divisors through\\ Brill-Noether special points}

\author[I.~Gheorghita]{Iulia Gheorghita}
\address{Iulia Gheorghita  
\newline \indent Department of Mathematics
\newline \indent Boston College, Chestnut Hill, MA 02467}
\email{gheorgiu@bc.edu}

\author[N.~Tarasca]{Nicola Tarasca}
\address{Nicola Tarasca 
\newline \indent Department of Mathematics \& Applied Mathematics
\newline \indent Virginia Commonwealth University, Richmond, VA 23284}
\email{tarascan@vcu.edu}

\subjclass[2020]{14H10, 14H51, 14C25 (primary), 30F30 (secondary)}
\keywords{Moduli spaces of algebraic curves with differentials, Weierstrass  points, Brill-Noether special points, pseudo-effective cone of divisors, Teichm\"uller curves}

\begin{abstract} 
Inside the projectivized $k$-th Hodge bundle, we construct a collection of  divisors obtained by imposing vanishing at a Brill-Noether special point. We compute the classes of the closures of such divisors in two ways, using incidence geometry and restrictions to various families, including pencils of curves on K3 surfaces and pencils of Du Val curves.
We also show the extremality and rigidity of the closure of the incidence divisor consisting of smooth pointed curves together with 
a canonical or $2$-canonical divisor passing through the marked point.
\end{abstract}

\vspace*{-0.5pc}

\maketitle

\vspace{-1.5pc}

\section{Introduction}

Applying ideas from Brill-Noether theory, we 
construct  effective divisors in the moduli space of \mbox{$k$-canonical} divisors on algebraic curves.
To describe the ambient moduli space in question, start with
the $k$-th Hodge bundle $\E^k_g$. This is the vector bundle of stable $k$-differentials over the moduli space $\ov{\M}_g$ of stable curves of genus $g$.
It is defined as $\E^k_g:=\pi_* \left( \omega_\pi^{\otimes k}\right)$, where 
 $\pi\colon \mathcal{C}_g\rightarrow \ov{\M}_g$ is the universal curve with  relative dualizing sheaf $\omega_\pi$.
The projectivization $\P\E^k_g$ of $\E^k_g$ compactifies the moduli space of $k$-canonical divisors on smooth algebraic curves.
Various divisor classes on $\P\E^k_g$ have recently been considered in the literature, including the class of the closure of the divisorial stratum in $\P\E^k_g$ consisting of $k$-differentials with a double zero, and the first Chern class of Prym-Tyurin vector bundles 
 \cite{korotkin2011tau, korotkin2013tau, sauvagetcohomology, korotkin2019tau}.

Here our focus  is on divisors consisting of $k$-differentials which vanish at a Brill-Noether special point. To start, consider the effective divisor
\[ 
\W^k_g := \left\{(C, \mu) \in \Hg \; \big| \; C \text{ is smooth and } \mu  \text{ vanishes at a Weierstrass point} \right\}\!. 
\] 
We compute the class of $\W^k_g$ in terms of  standard generators of the Picard group of $\Hg$ with rational coefficients, generalizing the case $k=1$ treated in \cite{gheorghita2018effective}:

\begin{thm}
\label{thm:kW}
For $k\geq 1$ and $g\geq 2$, one has
\begin{multline*}
\left[\ov{\W}^k_g\right] = -g(g^2-1)\,\eta +  k(6g^2+4g+2)\,\lambda - k{g+1 \choose 2}\,\delta_0 \\
 - \sum_{i=1}^{\lfloor g/2 \rfloor} k(g+3)i(g-i)\,\delta_i \,\,\in\,\,\textup{Pic}_\Q\left(\Hg\right).
\end{multline*}
\end{thm}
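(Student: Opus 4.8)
The plan is to write $[\ov{\W}^k_g]=a\,\eta+b\,\lambda-c_0\,\delta_0-\sum_{i=1}^{\lfloor g/2\rfloor}c_i\,\delta_i$ with unknown rational coefficients (these classes generate $\textup{Pic}_\Q(\Hg)$) and to determine them by combining an incidence (pushforward) computation with restriction to explicit pencils; the case $k=1$ of \cite{gheorghita2018effective} serves as a template, and one expects every coefficient except that of $\eta$ to scale linearly in $k$. The coefficient $a$ is immediate: restricting to the fiber $\P H^0(C,\omega_C^{\otimes k})$ over a general $[C]\in\M_g$ kills $\lambda$ and all $\delta_i$, sends $\eta$ to $c_1\bigl(\mathcal O(-1)\bigr)$, and exhibits $\W^k_g$ as the union of the $g(g^2-1)$ hyperplanes $\{\mu:\mu(p)=0\}$ with $p$ ranging over the weighted Weierstrass points of $C$; hence $a=-g(g^2-1)$, independent of $k$.

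For the remaining coefficients, let $p\colon\mathcal X\to\Hg$ be the universal curve, pulled back from $\mathcal C_g\to\ov{\M}_g$. Inside $\mathcal X$ I would use two divisors: the zero divisor $\mathcal D$ of the universal $k$-differential, a section of $\omega_p^{\otimes k}\otimes p^*\mathcal O_{\Hg}(1)$, so (recalling $\eta=c_1(\mathcal O_{\Hg}(-1))$) $[\mathcal D]=k\,c_1(\omega_p)-p^*\eta$; and the relative Weierstrass divisor $\mathcal W$, cut out by the relative Wronskian of the pulled-back Hodge bundle $p^*\E^1_g$, a section of $\omega_p^{\otimes\binom{g+1}{2}}\otimes p^*(\det\E^1_g)^{-1}$, with class $\binom{g+1}{2}c_1(\omega_p)-p^*\lambda$ corrected by boundary terms $B$ supported over $\partial\ov{\M}_g$ (the relative form of Cukierman's formula for the closure of the Weierstrass divisor on $\ov{\M}_{g,1}=\mathcal C_g$). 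Over the locus of smooth curves $\mathcal D\cap\mathcal W$ maps birationally onto $\W^k_g$, since a general $k$-differential misses the finitely many Weierstrass points while on $\W^k_g$ it passes simply through exactly one; hence $[\ov{\W}^k_g]=p_*\bigl([\mathcal D]\cdot[\mathcal W]\bigr)$ up to classes of components of $\ov{\mathcal D}\cap\ov{\mathcal W}$ supported over boundary divisors. Expanding the product and using $p_*c_1(\omega_p)=2g-2$, $p_*c_1(\omega_p)^2=\kappa_1=12\lambda-\delta$, and $p_*(p^*\alpha)=0$, the main terms $k\binom{g+1}{2}(12\lambda-\delta)-k(2g-2)\lambda-\binom{g+1}{2}(2g-2)\eta$ already give $a=-g(g^2-1)$, $b=k(6g^2+4g+2)$, and $c_0=k\binom{g+1}{2}$ ($B$ has no $\delta_0$-component, so contributes nothing there), while for $i\ge1$ the coefficient $c_i$ is $k\binom{g+1}{2}$ modified by $k\,p_*(c_1(\omega_p)\cdot B)$ and by the excess boundary components.

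What remains is to compute the $\delta_i$-coefficients for $i\ge1$, and for this I would restrict $\ov{\W}^k_g$ to one-parameter families and match with enumerative counts. A Lefschetz pencil of curves in a linear system on a K3 surface, lifted to $\Hg$ by a general section of the restricted Hodge bundle, meets only $\delta_0$ among the boundary divisors, and its numerical invariants are classical (e.g.\ $\deg\lambda=g+1$, $\deg\delta_0=6g+18$); counting its members whose chosen $k$-differential vanishes at a Weierstrass point reconfirms $(a,b,c_0)$. Pencils of Du Val curves --- plane curves through an assigned configuration of base points --- can be arranged to acquire a fiber with a genus-$i$ tail, hence to meet $\delta_i$, and the relations $\deg(\ov{\W}^k_g\cdot\text{family})=a\deg\eta+b\deg\lambda-\sum c_i\deg\delta_i$ then pin down the $c_i$. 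The main obstacle is precisely the boundary behavior: whether treated intrinsically (describing the limit Weierstrass points of a reducible stable curve via Eisenbud--Harris limit linear series, the degeneration of stable $k$-differentials at a node, and thereby both $B$ and the excess components of $\ov{\mathcal D}\cap\ov{\mathcal W}$) or through the Du Val pencils (where a node degenerates and one must count exactly how many limit Weierstrass points on the special fiber meet the chosen differential), this is the delicate step, and I would expect to lean on the test families to bypass the intrinsic boundary bookkeeping.
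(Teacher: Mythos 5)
Your setup is the right one and your main-term arithmetic is correct: your $\mathcal D\subset\mathcal X$ is exactly the incidence divisor $\ov{\mathbb H}^k_{g,1}\subset\P\E^k_{g,1}$ of class $k\psi-\eta$, your $\mathcal W$ is the pullback of Cukierman's class $\mathcal W_g$, and pushing forward $(k\psi-\eta)\cdot\varphi^*\mathcal W_g$ with Mumford's formula is precisely the paper's first proof. But the step you defer is the actual content of the theorem, and neither of your two escape routes closes it. First, the identity $[\ov{\W}^k_g]=p_*([\mathcal D]\cdot[\mathcal W])$ with \emph{no} boundary excess is Proposition \ref{prop:BNirreducible} of the paper, and it is not automatic: one must show that over a \emph{general} point of each $\Delta_i$ the intersection $\ov{\mathcal D}\cap\ov{\mathcal W}$ is empty. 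This requires (i) knowing that limits of Weierstrass points stay away from the nodes (Eisenbud--Harris), (ii) describing a general stable $k$-differential on a reducible curve via the $k$-residue matching condition and checking its zeros avoid those finitely many limit points, and (iii) over $\Delta_0$, ruling out the stratum where the marked point sits on a rational bridge over the node, which the paper does by a degree count showing the restricted $k$-differential has no zeros there. In particular your assertion that ``$B$ has no $\delta_0$-component, so contributes nothing'' to $c_0$ only controls the \emph{class} of $\mathcal W$; it does not exclude excess components of $\ov{\mathcal D}\cap\ov{\mathcal W}$ lying over $\Delta_0$, which would shift $c_0$.

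Second, the test-family fallback as you describe it is underdetermined. The K3 pencil meets only $\delta_0$ and gives one linear relation in $(c_\eta,c_\lambda,c_0)$; the Du Val pencil meets $\delta_0$ and $\delta_1$ and gives one relation in $(c_\eta,c_\lambda,c_0,c_1)$; together with $c_\eta$ from the fiber restriction that is two equations for three unknowns $c_\lambda,c_0,c_1$, and nothing yet constrains $c_i$ for $i\geq 2$ (the Du Val pencil of the paper degenerates only to an elliptic tail, and it is not clear how to arrange Du Val-type pencils meeting a prescribed $\delta_i$). The paper supplements these with a pencil of hyperelliptic curves (a second, independent relation in $c_\lambda,c_0$) and a pull-back to $\ov{\M}_{0,g}$ by attaching elliptic tails carrying a fixed $k$-differential, which shows the image avoids $\ov{\W}^k_g$ and yields $c_i=\tfrac{i(g-i)}{g-1}c_1$ for $i\geq 2$. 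Without either the excess-vanishing argument or these two additional families, the coefficients $c_0,c_1,\dots,c_{\lfloor g/2\rfloor}$ are not determined by what you have written.
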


Here, $\eta := c_1\big(\mathscr{O}_{\P\E^k_g}(-1)\big)$;  $\lambda$ is the pull-back from $\ov{\M}_g$ of the first Chern class of the Hodge bundle; $\delta_0$ is the pull-back from $\ov{\M}_g$ of the class of the locus of curves whose general element has a non-disconnecting node; 
 and $\delta_i$ is the pull-back from $\ov{\M}_g$ of the class of the locus of curves whose general element has a disconnecting node and components of genera $i$ and $g-i$, for $1\leq i\leq \lfloor g/2 \rfloor$.

Additionally, we consider  divisors in $\Hg$ obtained by imposing vanishing at an arbitrary Brill-Noether special point. 
We briefly review the setup and refer the reader to \S\ref{sec:BNdiv} for the required background.
For $g\geq 2$ and a sequence 
\[
\bm{a}:0\leq a_0< \dots <a_r\leq d, 
\]
define $\rho(g,r,d,\bm{a}):= g-(r+1)(g-d+r)-\sum_{i=0}^r (a_i-i)$. When $\rho(g,r,d,\bm{a})=-1$, 
a general curve of genus $g$ contains only finitely many points $P$
where a linear series $\ell$ of type $\mathfrak{g}^r_d$ has vanishing sequence $\bm{a}^\ell(P)\geq \bm{a}$.
In this case, the locus $\M_{g,d}^{\bm{a}}$ of pointed curves $(C,P)$ which admit a linear series $\ell$ of type $\mathfrak{g}^r_d$ with $\bm{a}^\ell(P)\geq \bm{a}$ is a proper subvariety with a unique irreducible divisorial component inside the moduli space $\M_{g,1}$ of pointed genus-$g$ curves  \cite{EH-1}. For $d=2g-2$, $r=g-1$, and $\bm{a}=(0,1,2,\dots,g-2,g)$, one obtains the divisor of curves with a marked Weierstrass point. The class of the divisorial component of $\ov{\M}_{g,d}^{\bm{a}}$ lies in the cone spanned by the pullback of the Brill-Noether divisor class $\mathcal{BN}_g$ from $\ov{\M}_g$ and the  class $\mathcal{W}_g$ of the divisor of marked Weierstrass points \cite{EH-1}. Specifically, the divisorial component of $\ov{\M}_{g,d}^{\bm{a}}$ has class equal to $\mu_{g,d,\bm{a}} \, \mathcal{BN}_g + \nu_{g,d,\bm{a}} \, \mathcal{W}_g$, for some nonnegative rational coefficients $\mu_{g,d,\bm{a}}$ and $\nu_{g,d,\bm{a}}$ computed in \cite{FT}. These classes  have been used in various settings, including the study of the birational geometry of moduli spaces of pointed curves \cite{MR1953519, MR2530855}, and the proof of the non-varying property of sums of Lyapunov exponents for certain strata of abelian and quadratic differentials in low genus \cite{MR3033521, chen2014quadratic}. 

The loci $\M_{g,d}^{\bm{a}}$ induce natural subvarieties of $\P\E^k_g$.
Namely, given $g\geq 2$ and a sequence $\bm{a}:0\leq a_0< \dots <a_r\leq d$,
define the locus ${\H}^{\bm{a}}_{g,d}$ in $\Hg$ as 
\[
{\H}^{\bm{a}}_{g,d}:=\left\{ (C, \mu)\in \Hg \,\Bigg|\, 
\begin{array}{l}
\mbox{$C$ is smooth and $\mu$ vanishes at some point $P$}\\[0.2cm]
\mbox{such that }\bm{a}^\ell(P) \geq \bm{a} \mbox{ for some $\ell \in G^r_d(C)$}
\end{array}
\right\}.
\]
When $\rho(g,r,d,\bm{a})=-1$, the locus ${\H}^{\bm{a}}_{g,d}$ has a divisorial component.
For $d=2g-2$, $r=g-1$, and $\bm{a}=(0,1,2,\dots,g-2,g)$, the locus ${\H}^{\bm{a}}_{g,d}$ specializes to the above divisor $\W^k_g$.
We show that the class of the closure of the divisorial component of ${\H}^{\bm{a}}_{g,d}$, denoted by $\left[ \ov{\H}^{\bm{a}}_{g,d} \right] \in \textup{Pic}_\Q\left(\Hg\right)$, lies in the cone spanned by the pullback of the Brill-Noether divisor class $\mathcal{BN}_g$ from $\ov{\M}_g$ and the divisor class $\left[\ov{\W}^k_g\right]$ from Theorem \ref{thm:kW}, analogously to the result from \cite{EH-1} for the divisorial components of $\M_{g,d}^{\bm{a}}\subset{\M}_{g,1}$. Specifically, we have:

\begin{cor}
\label{cor:kBN}
For $k\geq 1$, $g \geq 2$,  and   
 $\bm{a}: 0 \leq a_0 < \dots < a_r \leq d$  such that $\rho(g,r,d,\bm{a}) =-1$, 
one has
\[ 
\left[ \ov{\H}^{\bm{a}}_{g,d} \right] = 2k(g-1)\,\mu_{g,d,\bm{a}} \, \mathcal{BN}_g + \nu_{g,d,\bm{a}}  \left[\ov{\W}^k_g\right] \,\,\in\,\,\textup{Pic}_\Q\left(\Hg\right).
\]
\end{cor}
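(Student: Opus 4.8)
The plan is to realize $\ov{\H}^{\bm{a}}_{g,d}$ as the image of a Brill--Noether divisor on $\ov{\M}_{g,1}$ under the incidence correspondence used to prove Theorem~\ref{thm:kW}, and then substitute the relation of \cite{EH-1,FT}. Write $\varrho\colon\Hg\to\ov{\M}_g$ for the projection and identify $\mathcal{C}_g$ with $\ov{\M}_{g,1}$. On $\calX:=\Hg\times_{\ov{\M}_g}\mathcal{C}_g$, the universal curve over $\Hg$, let $q\colon\calX\to\Hg$ and $p\colon\calX\to\ov{\M}_{g,1}$ be the two projections, so that $\pi\circ p=\varrho\circ q$, and put $\psi:=c_1(\omega_q)$. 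The tautological inclusion $\mathscr{O}_{\Hg}(-1)\hookrightarrow\varrho^*\E^k_g$ together with base change $q_*\omega_q^{\otimes k}\cong\varrho^*\E^k_g$ yield the universal $k$-differential, a section of $\omega_q^{\otimes k}\otimes q^*\mathscr{O}_{\Hg}(1)$ whose zero divisor $Z$ satisfies $[Z]=k\psi-q^*\eta$; its restriction $q|_Z\colon Z\to\Hg$ is a covering of degree $k(2g-2)=2k(g-1)$ over the locus of smooth curves.

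The mechanism behind Theorem~\ref{thm:kW} is the identity: for a divisor $\mathcal{D}\subset\ov{\M}_{g,1}$ that is the closure of a locus in $\M_{g,1}$ defined by a pointwise geometric condition on $(C,P)$, the closure in $\Hg$ of $\{(C,\mu)\mid C\text{ smooth},\ \mu(P)=0\text{ for some }P\text{ with }(C,P)\in\mathcal{D}\}$ has class $q_*([Z]\cdot p^*[\mathcal{D}])$. The verification is the transversality/multiplicity-one statement that a general point of this divisor has exactly one zero of $\mu$ meeting the relevant locus, and that there is no spurious contribution over $\ov{\M}_g\setminus\M_g$ -- the same check already carried out for $\W^k_g$. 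Taking $\mathcal{D}$ to be the divisorial component of $\ov{\M}_{g,d}^{\bm{a}}$ gives $[\ov{\H}^{\bm{a}}_{g,d}]=q_*([Z]\cdot p^*[\ov{\M}_{g,d}^{\bm{a},\mathrm{div}}])$ (non-divisorial components of $\M_{g,d}^{\bm{a}}$ are irrelevant, since $q_*p^*$ cannot decrease their codimension), while $\mathcal{D}=\ov{\mathcal{W}}_g$ recovers Theorem~\ref{thm:kW} in the form $[\ov{\W}^k_g]=q_*([Z]\cdot p^*[\ov{\mathcal{W}}_g])$.

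To conclude I would substitute $[\ov{\M}_{g,d}^{\bm{a},\mathrm{div}}]=\mu_{g,d,\bm{a}}\,\mathcal{BN}_g+\nu_{g,d,\bm{a}}\,\mathcal{W}_g$ from \cite{EH-1,FT} and use linearity of $q_*([Z]\cdot p^*(-))$. The $\mathcal{W}_g$-term gives $\nu_{g,d,\bm{a}}\,[\ov{\W}^k_g]$ by the previous paragraph. For the $\mathcal{BN}_g$-term, since $\mathcal{BN}_g$ is pulled back from $\ov{\M}_g$ we have $p^*\mathcal{BN}_g=q^*\varrho^*\mathcal{BN}_g$, so the projection formula gives $q_*([Z]\cdot p^*\mathcal{BN}_g)=q_*[Z]\cdot\varrho^*\mathcal{BN}_g$, and $q_*[Z]=k\,q_*\psi-q_*q^*\eta=k(2g-2)\,[\Hg]=2k(g-1)\,[\Hg]$ because $c_1(\omega_q)$ has fiberwise degree $2g-2$ and $q$ has one-dimensional fibers; this term therefore contributes $2k(g-1)\,\mu_{g,d,\bm{a}}\,\mathcal{BN}_g$, and summing the two yields the asserted formula. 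The step I expect to be the main obstacle is the boundary part of the identity in the second paragraph -- ruling out extra multiples of $\delta_i$ over $\ov{\M}_g\setminus\M_g$ -- which I would treat exactly as for $\ov{\mathcal{W}}_g$ in the proof of Theorem~\ref{thm:kW}, or, if needed, by restricting to the pencils of curves on K3 and Du Val surfaces used there, now carrying a section of $\E^k_g$ vanishing along the chosen Brill--Noether locus. As a consistency check, restriction to a general fiber $\varrho^{-1}([C])\cong\lvert kK_C\rvert$ annihilates $\mathcal{BN}_g$ and leaves a union of hyperplanes of degree $\nu_{g,d,\bm{a}}(g^{3}-g)$, matching the number of points $P$ on a general curve with $\bm{a}^\ell(P)\geq\bm{a}$ for some $\ell\in G^r_d(C)$ (counted with multiplicity).
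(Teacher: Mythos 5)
Your proposal is correct and follows essentially the same route as the paper: your fiber product $\calX=\Hg\times_{\ov{\M}_g}\mathcal{C}_g$ is the paper's $\P\E^k_{g,1}$, your zero divisor $Z$ with class $k\psi-q^*\eta$ is the incidence divisor $\ov{\mathbb{H}}^k_{g,1}$ of Lemma~\ref{lem:Z1class}, and your key identity $[\ov{\H}^{\bm{a}}_{g,d}]=q_*([Z]\cdot p^*[\ov{\M}^{\bm{a}}_{g,d}])$ together with the boundary check is exactly Proposition~\ref{prop:BNirreducible}, which the paper likewise proves once for general $\bm{a}$ and then combines with the Eisenbud--Harris/Farkas--Tarasca decomposition, linearity, and the projection formula (giving the same coefficient $k(2g-2)=2k(g-1)$ on the $\mathcal{BN}_g$ term). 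The only step you leave unverified, the absence of excess boundary contributions, is handled in the paper by the same argument you indicate, namely the one used for $\ov{\W}^k_g$.
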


Next, we show how Theorem \ref{thm:kW} for $g=2$ complements a result from \cite{korotkin2019tau}.
The closure of the divisorial stratum in $\P\E^k_g$ of $k$-differentials with a double zero is computed for all $(g, k) \neq (2,2)$ in \cite{korotkin2011tau, korotkin2013tau, sauvagetcohomology, korotkin2019tau}.
For $g=k=2$, the formula from \cite{korotkin2019tau} specializes to a weighted sum of two components. Namely,
one has
\[
\left[\ov{\H}^2_2(2, 1, 1)\right]+ 2\left[\ov{\H}^2_2(2,2)\right] = 72 \lambda -10\eta -6\delta_0-6\delta_1
\] 
from  \cite[Def.~1.3, Thm 1.12]{korotkin2019tau}, where ${\H}^2_2(2, 1, 1)\subset \P\E^2_2$ is the divisorial stratum of quadratic differentials vanishing at a Weierstrass point, and ${\H}^2_2(2,2)\subset \P\E^2_2$ is the divisorial stratum consisting of squares of holomorphic differentials.
Since one has $\mathbb{W}^2_2 =  {\H}_2^2(2,1,1)$, we deduce:

\begin{cor}
The closure of the divisorial stratum ${\H}^2_2(2,2)\subset \P\E^2_2$ parametrizing squares of holomorphic differentials has class 
\[
\left[\ov{\H}^2_2(2,2)\right] = -2\eta + 12\lambda - \delta_0 \,\, \in\,\,  \mathrm{Pic}\left(\P\E^2_{2}\right).
\]
\end{cor}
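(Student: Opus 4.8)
The plan is to read off $\big[\ov{\H}^2_2(2,2)\big]$ from the displayed relation of \cite{korotkin2019tau}, substituting the class $\big[\ov{\W}^2_2\big]$ supplied by Theorem~\ref{thm:kW}, and then to rewrite the result using the standard relation in $\mathrm{Pic}_\Q(\ov{\M}_2)$.

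First I would specialize Theorem~\ref{thm:kW} to $g=k=2$. The coefficients become $-g(g^2-1)=-6$, $\,k(6g^2+4g+2)=68$, $\,-k\binom{g+1}{2}=-6$, and $\,-k(g+3)(g-1)=-10$ for the coefficient of $\delta_1$, so $\big[\ov{\W}^2_2\big]=-6\eta+68\lambda-6\delta_0-10\delta_1$. Since the loci $\mathbb{W}^2_2$ and $\H^2_2(2,1,1)$ coincide, their closures agree as divisors, whence $\big[\ov{\H}^2_2(2,1,1)\big]=\big[\ov{\W}^2_2\big]$. Substituting this into the identity $\big[\ov{\H}^2_2(2,1,1)\big]+2\big[\ov{\H}^2_2(2,2)\big]=72\lambda-10\eta-6\delta_0-6\delta_1$ from \cite[Def.~1.3, Thm 1.12]{korotkin2019tau} and solving gives $2\big[\ov{\H}^2_2(2,2)\big]=4\lambda-4\eta+4\delta_1$, that is, $\big[\ov{\H}^2_2(2,2)\big]=2\lambda-2\eta+2\delta_1$.

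Finally I would invoke the relation $10\lambda=\delta_0+2\delta_1$ in $\mathrm{Pic}_\Q(\ov{\M}_2)$, which pulls back to the same relation in $\mathrm{Pic}_\Q(\P\E^2_2)$; replacing $2\delta_1$ by $10\lambda-\delta_0$ yields $\big[\ov{\H}^2_2(2,2)\big]=-2\eta+12\lambda-\delta_0$, as claimed, and the integrality of the resulting coefficients promotes this to an identity in $\mathrm{Pic}(\P\E^2_2)$. The argument is purely formal, so there is no genuine obstacle; the only items requiring attention are precisely those already recorded in the excerpt, namely that $\H^2_2(2,2)$ enters the formula of \cite{korotkin2019tau} with multiplicity exactly $2$ and that $\mathbb{W}^2_2$ and $\H^2_2(2,1,1)$ define the same reduced divisor, so that the substitution of divisor classes is legitimate.
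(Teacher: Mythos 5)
Your computation is correct and is exactly the deduction the paper intends: specialize Theorem~\ref{thm:kW} to $g=k=2$ to get $\big[\ov{\W}^2_2\big]=-6\eta+68\lambda-6\delta_0-10\delta_1$, substitute into the relation $\big[\ov{\H}^2_2(2,1,1)\big]+2\big[\ov{\H}^2_2(2,2)\big]=72\lambda-10\eta-6\delta_0-6\delta_1$ using $\mathbb{W}^2_2=\H^2_2(2,1,1)$, and eliminate $\delta_1$ via the genus-$2$ relation $10\lambda=\delta_0+2\delta_1$. All the arithmetic checks out and matches the stated class.
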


We present two independent proofs of Theorem \ref{thm:kW} and Corollary \ref{cor:kBN}. For the first proof,
the key idea  is to regard  the locus ${\H}^{\bm{a}}_{g,d}$ as the projection of a locus in the space $\P\E^k_{g,1}$ obtained by pulling-back $\P\E^k_g$ via $\ov{\M}_{g,1}\rightarrow \ov{\M}_g$. Let
\begin{equation}
\label{eq:incdiv}
\mathbb{H}^k_{g, 1} \subset \P\E^k_{g,1} 
\end{equation}
be the \textit{incidence divisor} consisting of smooth pointed curves together with the class of a stable $k$-differential vanishing at the marked point.
In Proposition \ref{prop:BNirreducible}, we show that the divisorial component of $\ov{\H}^{\bm{a}}_{g,d}$ is the push-forward of the intersection of $\ov{\mathbb{H}}^k_{g, 1}$ and the pull-back of the divisorial component of $\ov{\M}_{g,d}^{\bm{a}}\subset\ov{\M}_{g,1}$, and thus deduce the divisorial class of  $\ov{\H}^{\bm{a}}_{g,d}$ in \S\ref{sec:proofThmskWkBN}.

An alternative approach is pursued in \S\ref{sec:testfamilies}, where the class of the divisorial component of $\ov{\H}^{\bm{a}}_{g,d}$ is computed by intersecting with various test families.
This proof has the advantage of showing explicit  restrictions of the locus $\ov{\H}^{\bm{a}}_{g,d}$, including restrictions to pencils of curves on K3 surfaces following \cite{MR852158, cukierman1993curves, MR2123229} and pencils of Du Val curves following \cite{arbarello2016explicit}.

Finally, we obtain the following  result on the incidence divisor.
Using a dense collection of Teichm\"uller curves of abelian and quadratic differentials
having negative intersection with the incidence divisor, we show:

\begin{thm}
\label{thm:Z1extremal}
For $k\in \{1,2\}$, the class of $\ov{\mathbb{H}}^k_{g, 1}$ is rigid and extremal in $\ov{\mathrm{Eff}}^1\left( \P\E^k_{g,1}\right)$.
\end{thm}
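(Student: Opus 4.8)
The plan is to show that $\ov{\mathbb{H}}^k_{g,1}$ is both rigid and extremal in $\ov{\mathrm{Eff}}^1(\P\E^k_{g,1})$ by exhibiting a covering family of irreducible curves inside $\ov{\mathbb{H}}^k_{g,1}$, each meeting the divisor negatively. Recall the standard principle (see e.g.\ the arguments in the birational geometry of moduli spaces): if an effective divisor $D$ on a projective variety contains a covering family of irreducible curves $\{B_t\}$ with $D\cdot B_t<0$, then $D$ is rigid, and moreover if the $B_t$ sweep out all of $D$ then $D$ is extremal in the pseudo-effective cone. Indeed, writing $D \equiv \sum c_i D_i$ with $D_i$ irreducible effective, each $D_i$ that is not a component of $D$ meets a general $B_t$ nonnegatively; since $D\cdot B_t<0$, some component of $D$ with positive coefficient is forced to be $\ov{\mathbb{H}}^k_{g,1}$ itself, and running this over a covering family pins down the class.

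The curves $B_t$ will come from Teichm\"uller curves. A Teichm\"uller curve generated by a (primitive, say) abelian differential, or the square of such for the quadratic case $k=2$, maps to $\ov{\M}_g$; its lift to $\P\E^k_g$ is canonical since the differential itself provides the point in the fiber. To land inside $\P\E^k_{g,1}$ one marks a zero of the differential: choosing a section of the universal curve given by a zero of order $\geq 1$ produces a curve $\wt{B}_t \subset \P\E^k_{g,1}$ that by construction lies in the incidence divisor $\ov{\mathbb{H}}^k_{g,1}$, because the $k$-differential vanishes at the marked point. The self-intersection/normal-bundle computation along a Teichm\"uller curve is governed by the sum of Lyapunov exponents and the degree of $\eta$; the key input is that the relevant intersection number $\ov{\mathbb{H}}^k_{g,1}\cdot \wt{B}_t$ is negative. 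This negativity should follow by expressing the restriction of $\ov{\mathbb{H}}^k_{g,1}$ to $\wt{B}_t$ in terms of $\eta$, $\lambda$, and $\psi$ (the cotangent class at the marked point), using that $\eta$ restricts to (a negative multiple of) the degree of the line bundle generated by the differential and that $\psi$ restricts to the self-intersection of the marked zero section, which for Teichm\"uller curves is controlled by the Euler characteristic of the curve. The known non-varying or explicit Lyapunov-exponent computations in low genus, together with the classification results on Teichm\"uller curves, then give the strict inequality.

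The density statement is what makes this yield extremality and not merely rigidity: one needs that the union of these Teichm\"uller curves is Zariski dense in $\ov{\mathbb{H}}^k_{g,1}$. For $k=1$ this should follow from the density of Teichm\"uller curves (equivalently, of the $\mathrm{SL}_2(\R)$-orbits of square-tiled surfaces) in each stratum of abelian differentials, together with the fact that as the stratum and the marked zero vary one covers a dense subset of the incidence divisor; for $k=2$ one uses the analogous density of Teichm\"uller curves of quadratic differentials and the canonical double cover. I would assemble this by first stratifying $\ov{\mathbb{H}}^k_{g,1}$ by the order of vanishing at the marked point and the type of the $k$-differential, then noting that the open dense stratum is dominated by pairs (stratum component, marked zero), and finally invoking density of square-tiled surfaces within each such component.

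\medskip

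\emph{Main obstacle.} The principal difficulty is twofold: first, establishing the strict negativity $\ov{\mathbb{H}}^k_{g,1}\cdot \wt{B}_t<0$ uniformly over the family — this requires a clean formula for the class restricted to a Teichm\"uller curve and a lower bound argument, which is where the Lyapunov-exponent input and the handling of the boundary contributions (nodal degenerations of square-tiled surfaces) become delicate. Second, making the density/covering argument precise across the boundary of $\P\E^k_{g,1}$, so that one genuinely gets extremality rather than just rigidity; controlling how the Teichual\"uller curves approach and populate the divisor near its intersection with the boundary divisors $\eta, \lambda, \delta_i$ is the subtle point. I expect the restriction $k\in\{1,2\}$ in the statement to be exactly an artifact of the availability of Teichm\"uller dynamics (abelian and quadratic differentials), with no such tool available for $k\geq 3$.
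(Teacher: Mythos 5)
Your overall strategy---Teichm\"uller curves lying in the incidence divisor, negativity of their intersection with it, and Zariski density of their union---is exactly the paper's, and your rigidity argument is essentially the one given there. But there is a genuine gap in the passage from this data to \emph{extremality in the pseudo-effective cone}. The ``standard principle'' you invoke, as you sketch it, only treats decompositions $D\equiv\sum c_iD_i$ into irreducible \emph{effective} divisors; extremality in $\ov{\mathrm{Eff}}^1\left(\P\E^k_{g,1}\right)$ requires ruling out decompositions $\left[\ov{\mathbb{H}}^k_{g,1}\right]=E_1+E_2$ with $E_1,E_2$ merely pseudo-effective, i.e.\ limits of effective classes, and negativity against individual rigid curves does not pass to such limits. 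The paper instead uses the criterion of Lemma \ref{lem:extremality}: one must produce a single \emph{big} class $B$ with $\mathcal{C}\cdot\left(\ov{\mathbb{H}}^k_{g,1}+B\right)\le 0$ \emph{uniformly} over the whole dense collection of Teichm\"uller curves. Verifying this uniformity is where the quantitative input actually enters: writing $B=dA$ for an ample $A$, one must show the admissible $d$ is bounded away from $0$ over the family, which requires the uniform bounds $0\le L\le g$ on the sum of the top Lyapunov exponents for $k=1$, and the boundedness of the area Siegel--Veech constants for $k=2$. Your proposal contains no analogue of this step, and without it you only get extremality in the effective cone (and rigidity), not the stated result.

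Relatedly, you locate the difficulty of the negativity in non-varying results and classification of Teichm\"uller curves in low genus, but that is not where the work is. For a Teichm\"uller curve $\mathcal{C}$ generated by an abelian differential in the principal stratum whose marked point is a simple zero, the known formulas give $\mathcal{C}\cdot\psi=-\chi/4$ and $\mathcal{C}\cdot\eta=-\chi/2$, so by Lemma \ref{lem:Z1class} one gets $\mathcal{C}\cdot\ov{\mathbb{H}}^1_{g,1}=\mathcal{C}\cdot(\psi-\eta)=\chi/4$: the Lyapunov-exponent dependence cancels, and strict negativity follows from $\chi<0$ (ampleness of $\psi$ on such families), uniformly and with no classification input; the case $k=2$ is analogous with $\mathcal{C}\cdot(2\psi-\eta)=\chi/3$. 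Finally, your phrasing in terms of a ``covering family'' is slightly misleading: the Teichm\"uller curves are rigid and do not move in the divisor; the criterion only needs their union to be Zariski dense, which is what holds here.
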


To review the notation:
A  class $E$ in the  cone $\ov{\mathrm{Eff}}^1( X)$ of pseudo-effective divisor classes  on a projective variety $X$ is called \textit{extremal} if $E = E_1 +E_2$ for $E_1$ and $E_2$  in $\ov{\mathrm{Eff}}^1( X)$ implies that both $E_1$ and $E_2$ are proportional to~$E$. An effective cycle class $E$ is called \textit{rigid} if any effective cycle with class $mE$ is supported on the support of $E$.

It is natural to ask whether the class of $\ov{\mathbb{H}}^k_{g, 1}$ is rigid and extremal also for $k\geq 3$. More generally, one can consider higher codimensional incidence varieties consisting of smooth $n$-pointed curves together with the class of a stable $k$-differential vanishing at all marked points. We have recently started the study of the classes of such varieties  in \cite{gt}. It is natural to ask whether the rigidity and extremality of the incidence divisor for $k\in \{1,2\}$ extend to its higher codimensional counterparts.

\subsection*{Acknowledgements} 
We would like to thank Dawei Chen for  helpful conversations on  $k$-differentials and the incidence variety compactification.


\section{Background on pointed Brill-Noether divisors}
\label{sec:BNdiv} 

Here we review the background on pointed Brill-Noether theory required in  \S\S\ref{sec:proofThmskWkBN}-\ref{sec:testfamilies} following \cite{MR910206, EH-1, FT}. 

For a smooth algebraic curve $C$, a linear series of type $\mathfrak{g}^r_d$ on $C$ is a pair $(L,V)$ where $L\in \mathrm{Pic}^d(C)$ and $V\subseteq H^0(C,L)$ is a subspace of dimension $r+1$.
The variety $G^r_d(C)$ parametrizes linear series of type $\mathfrak{g}^r_d$ on $C$.
For $\ell = (L,V)$ in $G^r_d(C)$, the \textit{vanishing sequence} of $\ell$ at a point $P$ in $C$ 
\[
\bm{a}^\ell(P): 0 \leq a_0 < \dots < a_r \leq d
\] 
is defined as the increasing sequence of vanishing orders of sections in $V$ at~$P$. 
For a sequence $\bm{a}: 0 \leq a_0 < \dots < a_r \leq d$, the \textit{adjusted Brill-Noether number}  is defined as 
\[
\rho\left(g,r,d,\bm{a}\right) :=g - (r+1)(g-d+r) - \sum_{i=0}^r \left(a_i - i\right).
\] 
The pointed version of the Brill-Noether Theorem \cite{MR910206} states that a general pointed curve $(C,P)$ of genus $g>0$ admits a linear series $\ell \in G^r_d(C)$ with vanishing sequence $\bm{a}^\ell(P) = \bm{a}$ if and only if 
\[\sum_{i=0}^r(a_i - i + g - d + r)_+ \leq g,\] 
where $(n)_+ := \max\{n, 0\}$ for $n \in \Z$. 
This implies \mbox{$\rho(g,r,d,\bm{a}^\ell(P))\geq 0$} for a general $(C,P)$ and any $\ell\in G^r_d(C)$. We refer to \cite{FT2} for explicit examples of smooth pointed curves satisfying the pointed Brill-Noether Theorem.

When $g \in \{0,1\}$, one has $\rho(g,r,d,\bm{a}^\ell(P)) \geq 0$ for any $(C,P)$ and any $\ell \in G^r_d(C)$. However, when $g \geq 2$ and   
 $\bm{a}: 0 \leq a_0 < \dots < a_r \leq d$  such that $\rho(g,r,d,\bm{a}) <0$, the locus 
 \[
 \M^{\bm{a}}_{g,d}:=\left\{ (C,P)\in \M_{g,1} \,|\, \bm{a}^\ell(P) \geq \bm{a} \mbox{ for some } \ell\in G^r_d(C)\right\}
 \] 
 is a proper subvariety of $\M_{g,1}$: when $\rho(g,r,d,\bm{a})=-1$, it contains a unique divisorial component, while all components have higher codimension if $\rho(g,r,d,\bm{a})<-1$ \cite{EH-1}. 
For example, when $d=2g-2$, \mbox{$r=g-1$,} and $\bm{a}=(0,1,2,\dots,g-2,g)$, the locus $\M^{\bm{a}}_{g,d}$ is the irreducible divisor of curves with a marked Weierstrass point.
In general, $\M^{\bm{a}}_{g,d}$ may not be irreducible, see \cite[\S 2]{EH-1}.

Assume $\rho(g,r,d,\bm{a}) =-1$, and denote by $\left[\ov{\M}^{\bm{a}}_{g,d}\right]\in \mathrm{Pic}\left( \ov{\M}_{g,1}\right)$ the class of the closure of the divisorial component of $\M^{\bm{a}}_{g,d}$.
After \cite{EH-1}, one has
\begin{equation}
\label{eq:muBNnuW}
\left[\ov{\M}^{\bm{a}}_{g,d}\right] = \mu_{g,d,\bm{a}} \, \mathcal{BN}_g + \nu_{g,d,\bm{a}} \, \mathcal{W}_g  \,\,\in\,\, \mathrm{Pic}\left( \ov{\M}_{g,1}\right),
\end{equation}
for some $\mu_{g,d,\bm{a}},\nu_{g,d,\bm{a}} \in \mathbb{Q}_{\geq 0}$, where $\mathcal{W}_g$ is the class of the divisor of curves with a marked Weierstrass point \cite{Cuk}:
\begin{equation}
\label{eq:W}
 \mathcal{W}_g  := {g+1 \choose 2}\psi - \lambda - \sum_{i=1}^{g-1} {g-i+1 \choose 2}\delta_i \,\,\in\,\, \mathrm{Pic}\left( \ov{\M}_{g,1}\right),
\end{equation}
and $\mathcal{B}\mathcal{N}_g$ is the pullback of the Brill-Noether divisor class from $\ov{\M}_g$ \cite{MR910206}:
\begin{equation}
\label{eq:BN}
\mathcal{BN}_g := (g+3)\lambda - \frac{g+1}{6}\delta_0 - \sum_{i=1}^{g-1} i(g-i)\delta_i \,\,\in\,\, \mathrm{Pic}\left( \ov{\M}_{g,1}\right).
\end{equation} 

Explicit formulae for $\mu_{g,d,\bm{a}} $ and $\nu_{g,d,\bm{a}}$ were computed in \cite{FT}, and make an appearance in our computations. These values are expressed in terms of 
 the number 
 \begin{equation}
 \label{eq:ngdadef}
 n_{g,d,\bm{a}}:= \#\left\{ (P, \ell) \in C \times G^r_d(C) \,| \, \bm{a}^\ell(P) \geq  \bm{a}\right\}, 
\end{equation}
 where $C$ is a general curve of genus $g \geq 2$. Let $\delta^i_j$ be the Kronecker delta.  After \cite{FT}, one has
\begin{equation}
\label{eq:ngda}
\begin{split}
 n_{g,d,\bm{a}} = g! \sum_{0 \leq k_1 < k_2 \leq r}& \left( (a_{k_2} - a_{k_1})^2 -1\right) \\ 
 &\times \frac{\prod_{0 \leq i < j \leq r} \left(a_j - \delta^{k_1}_j - \delta^{k_2}_j - a_i + \delta^{k_1}_i + \delta^{k_2}_i \right)}{\prod_{i=0}^r \left( g-d+r+a_i-\delta^{k_1}_i-\delta^{k_2}_i \right)!}
\end{split}
\end{equation}
and consequently, one has
\begin{align}
\label{eq:munu}
\begin{split}
\mu_{g,d,\bm{a}} &= -\frac{n_{g,d,\bm{a}}}{2(g^2-1)} + \frac{1}{4 \binom{g-1}{2}}\sum_{i=0}^r n_{g-1,d,\bm{a}^i},\\
\nu_{g,d,\bm{a}} &= \frac{n_{g,d,\bm{a}}}{g(g^2-1)}
\end{split}
\end{align}
where $\bm{a}^i:=\left(a_0+1-\delta_0^i, \dots, a_r + 1 - \delta_r^i \right)$.


\section{The incidence divisors and pointed Brill-Noether divisors}
\label{sec:proofThmskWkBN}

Here we prove Theorem \ref{thm:kW} and Corollary \ref{cor:kBN}. 
The argument involves  the incidence divisor ${\mathbb{H}}^k_{g, 1}\subset \P\E^k_{g,1}$ from \eqref{eq:incdiv}.
The closure  $\ov{\mathbb{H}}^k_{g, 1}$ in $\P\E^k_{g,1}$ is described by the incidence variety compactification introduced in \cite{bcggm1, bcggm}.
The class of $\ov{\mathbb{H}}^k_{g, 1}$ is given by:

\begin{lemma}[{\cites[\S1.6]{sauvagetcohomology}[\S4]{korotkin2019tau}}]
\label{lem:Z1class}
One has $\ov{\mathbb{H}}^k_{g, 1} \equiv k \,\psi -\eta$ in $\mathrm{Pic}\left( \P\E^k_{g,1}\right)$ for $k\geq 1$ and $g\geq 2$.
\end{lemma}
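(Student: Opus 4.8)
The plan is to compute the class of $\ov{\mathbb{H}}^k_{g,1}$ as a section locus of a line bundle on (a resolution of) $\P\E^k_{g,1}$. Over the open locus of smooth pointed curves $(C,P)$, a point of $\P\E^k_{g,1}$ is the data of $(C,P)$ together with a one-dimensional subspace $\langle \mu\rangle \subset H^0(C,\omega_C^{\otimes k})$, and the condition defining $\mathbb{H}^k_{g,1}$ is that $\mu$ vanishes at $P$, i.e. that the image of $\mu$ under the evaluation map $H^0(C,\omega_C^{\otimes k}) \to \omega_C^{\otimes k}|_P$ is zero. This evaluation map globalizes: over $\P\E^k_{g,1}$ there is a tautological inclusion $\mathscr{O}_{\P\E^k_{g,1}}(-1)\hookrightarrow p^*\E^k_{g,1}$ (where $p$ is the projection to $\ov{\M}_{g,1}$ and I pull back the Hodge bundle), composed with the restriction-to-the-section morphism $p^*\E^k_{g,1}\to (\omega_\pi^{\otimes k})|_{\sigma}$, where $\sigma$ is the marked section. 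The composite is a section of the line bundle $\mathscr{O}_{\P\E^k_{g,1}}(1)\otimes (\omega_\pi^{\otimes k})|_\sigma$, whose zero locus is exactly $\mathbb{H}^k_{g,1}$ on the smooth locus. Taking first Chern classes gives $[\ov{\mathbb{H}}^k_{g,1}] = -\eta + k\psi$, since $c_1\big((\omega_\pi)|_\sigma\big) = \psi$ and $c_1\big(\mathscr{O}_{\P\E^k_{g,1}}(1)\big) = -\eta$ by definition of $\eta$.

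The key steps, in order: (i) set up the tautological evaluation section on the open part as above and identify its vanishing locus with $\mathbb{H}^k_{g,1}$; (ii) check that $\ov{\mathbb{H}}^k_{g,1}$, defined via the incidence variety compactification of \cite{bcggm1, bcggm}, contains no boundary divisor of $\P\E^k_{g,1}$ with multiplicity — equivalently, that the closure of the section locus computed above is literally $\ov{\mathbb{H}}^k_{g,1}$ and not $\ov{\mathbb{H}}^k_{g,1}$ plus boundary corrections; (iii) extend the evaluation section over the boundary, or alternatively argue by codimension that since $\mathrm{Pic}_\Q(\P\E^k_{g,1})$ is generated by $\eta$, $\psi$, $\lambda$, and boundary classes, and since the restriction of $[\ov{\mathbb{H}}^k_{g,1}]$ to the interior is $k\psi-\eta$, it remains only to pin down the boundary coefficients — which vanish by a direct local analysis at a general point of each boundary divisor, using that a general stable $k$-differential in the incidence compactification over such a point does not vanish identically on the component carrying the node or the marked point. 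I would also simply cite \cite{sauvagetcohomology, korotkin2019tau} for this, since the statement is attributed to them, and present the argument above as a self-contained verification.

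The main obstacle is step (ii)/(iii): controlling the behavior of $\ov{\mathbb{H}}^k_{g,1}$ over the boundary of $\P\E^k_{g,1}$, where the fibers of $\P\E^k_{g,1}$ and the meaning of "vanishing at the marked point" are governed by the incidence variety compactification rather than by naive limits of sections. One must verify that no component of $\delta_0$ or $\delta_i$ (pulled back, or the new boundary divisors of $\ov{\M}_{g,1}$ over them) is swept out entirely by limits of incidence data, so that the extension of the evaluation section acquires no extra zeros or poles along the boundary. This is precisely the kind of computation carried out in \cite{bcggm1, bcggm} and used in \cite{sauvagetcohomology, korotkin2019tau}; for the purposes of this paper it suffices to invoke their result, and the proof can be a one-paragraph reduction to those references together with the interior computation above.
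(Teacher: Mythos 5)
Your argument is correct, but note that the paper does not prove this lemma at all: it is stated with a citation to \cite{sauvagetcohomology} and \cite{korotkin2019tau}, and no proof is given in the text. So your evaluation-section computation is a genuine (and welcome) self-contained verification rather than a variant of the paper's argument. Two remarks on your steps (ii)--(iii), which are less delicate than you suggest. First, the evaluation morphism $\varphi^*\E^k_{g,1}\to \sigma^*\omega_\pi^{\otimes k}$ is defined over \emph{all} of $\ov{\M}_{g,1}$, not just the interior, because the marked point of a stable pointed curve is always a smooth point of the curve; so the composite with the tautological inclusion $\mathscr{O}_{\P\E^k_{g,1}}(-1)\hookrightarrow \varphi^*\E^k_{g,1}$ is a global section of $\mathscr{O}_{\P\E^k_{g,1}}(1)\otimes \sigma^*\omega_\pi^{\otimes k}$, with no extension problem. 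Second, since $\P\E^k_{g,1}\to\ov{\M}_{g,1}$ is a projective bundle, the boundary of $\P\E^k_{g,1}$ is a union of irreducible divisors lying over the boundary divisors of $\ov{\M}_{g,1}$, and the zero locus of a line-bundle section is pure of codimension one; hence the only possible discrepancy between that zero locus and the Zariski closure $\ov{\mathbb{H}}^k_{g,1}$ is a nonnegative combination of entire boundary divisors, which is killed by checking that at a \emph{general} point of each boundary divisor the general stable $k$-differential does not vanish at the (general) marked point. The incidence variety compactification is not needed for this: the class in question is that of the Zariski closure, and the IVC enters the paper only later, to describe which degenerate objects lie in that closure. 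Finally, generic transversality of the evaluation section along the interior locus (so that the multiplicity is one) follows because varying $\mu$ in the fiber of $\P\E^k_{g,1}\to\ov{\M}_{g,1}$ moves the value $\mu(P)$ surjectively onto the fiber of $\sigma^*\omega_\pi^{\otimes k}$; you should say this explicitly, but it is immediate. With these points made, your one-paragraph reduction is a complete proof and strictly more informative than the citation in the paper.
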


\noindent Here $\psi$ is the first Chern class of the cotangent line bundle at the marked point.

To prove Theorem \ref{thm:kW} and Corollary \ref{cor:kBN}, 
consider the forgetful morphisms: 
\[
\begin{tikzcd}
 & \P\E^k_{g,1} \arrow{dr}{\pi} \arrow{dl}[swap]{\varphi} \\
\ov{\M}_{g,1} && \P\E^k_g.
\end{tikzcd}
\]
The key step in the proof is the study of
the intersection in $\P\E^k_{g,1}$ of $\ov{\mathbb{H}}^k_{g, 1}$ and the pullbacks via $\varphi$ of pointed Brill-Noether divisors in $\ov{\M}_{g,1}$. 
Specifically, we have:

\begin{proposition}
\label{prop:BNirreducible} 
For $k\geq 1$, $g\geq 2$ and $\bm{a}: 0 \leq a_0 < \dots < a_r \leq d$  such that $\rho(g,r,d,\bm{a}) =-1$, one has
\[
\pi_*\left(\left[\ov{\mathbb{H}}^k_{g, 1}\right] \cdot \varphi^*\left[\ov{\M}^{\bm{a}}_{g,d}\right]\right) = \left[\ov{\H}^{\bm{a}}_{g,d}\right] \,\, \in\,\,\mathrm{Pic}\left( \P\E^k_{g} \right).
\] 
\end{proposition}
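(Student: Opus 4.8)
The plan is to realize the product $\big[\ov{\mathbb{H}}^k_{g, 1}\big]\cdot\varphi^*\big[\ov{\M}^{\bm{a}}_{g,d}\big]$ as the class of a proper intersection cycle $Z:=\ov{\mathbb{H}}^k_{g, 1}\cap\varphi^*\ov{\M}^{\bm{a}}_{g,d}$, to isolate the unique component of $Z$ dominating a divisor in $\P\E^k_g$, and to show that this component maps birationally onto $\ov{\H}^{\bm{a}}_{g,d}$ while every other component is contracted by $\pi$. Let $N$ be the rank of $\E^k_g$, so that $\varphi\colon\P\E^k_{g,1}\to\ov{\M}_{g,1}$ is a $\P^{N-1}$-bundle (since $\P\E^k_{g,1}=\ov{\M}_{g,1}\times_{\ov{\M}_g}\P\E^k_g$) and $\pi$ is proper with all fibers of dimension at most one.

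First I would check that the intersection is proper. The divisor $\ov{\mathbb{H}}^k_{g, 1}$ is irreducible, being the closure of the incidence locus over smooth curves, which is a $\P^{N-2}$-bundle over $\M_{g,1}$, hence irreducible; it is an effective Cartier divisor, cut out by imposing the vanishing at the marked point of the universal stable $k$-differential, with class $k\psi-\eta$ by Lemma~\ref{lem:Z1class}. Since $\ov{\M}^{\bm{a}}_{g,d}\subset\ov{\M}_{g,1}$ is irreducible and $\varphi$ is a projective bundle, $\varphi^*\ov{\M}^{\bm{a}}_{g,d}$ is irreducible, and it is not contained in $\ov{\mathbb{H}}^k_{g, 1}$ because on a general pointed curve of $\ov{\M}^{\bm{a}}_{g,d}$ a general $k$-differential does not vanish at the marked point. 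Hence $Z$ has pure codimension two in $\P\E^k_{g,1}$ and $[Z]=\big[\ov{\mathbb{H}}^k_{g, 1}\big]\cdot\varphi^*\big[\ov{\M}^{\bm{a}}_{g,d}\big]$ as cycle classes.

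Next I would analyze $Z$ over the locus of smooth curves. For $g\geq2$ and $k\geq1$ the sheaf $\omega_C^{\otimes k}$ is globally generated, so ``$\mu(P)=0$'' is a single linear condition of constant rank on the fibers of $\E^k_g$; therefore, over smooth curves, $Z$ is, with reduced structure, the $\P^{N-2}$-bundle over the divisorial component of $\M^{\bm{a}}_{g,d}$ given by the triples $(C,P,[\mu])$ with $(C,P)$ in that component and $\mu(P)=0$, and $\ov{\mathbb{H}}^k_{g, 1}$ meets $\varphi^*\ov{\M}^{\bm{a}}_{g,d}$ transversally along it (the evaluation section already has nonvanishing derivative in the fiber direction of $\varphi$, which lies in the tangent space of $\varphi^*\ov{\M}^{\bm{a}}_{g,d}$). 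Let $W_0$ be the closure of this locus; then $W_0$ is a component of $Z$ of multiplicity one, every other component of $Z$ lies over $\partial\ov{\M}_g$, and $\pi(W_0)=\ov{\H}^{\bm{a}}_{g,d}$ (the contributions to $\H^{\bm{a}}_{g,d}$ coming from the non-divisorial components of $\M^{\bm{a}}_{g,d}$ have codimension at least two in $\P\E^k_g$, by a dimension count, so do not affect the divisorial part). Moreover, over a general point of $\ov{\H}^{\bm{a}}_{g,d}$ the fiber of $\pi|_{W_0}$ is a single point, because a general $k$-differential vanishing at one of the finitely many Brill-Noether special points of a general curve does not vanish at another; hence $\pi_*[W_0]=\big[\ov{\H}^{\bm{a}}_{g,d}\big]$.

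The main obstacle is the final step: to show that $\pi_*[W]=0$ for every component $W$ of $Z$ other than $W_0$. Such a $W$ lies over $\partial\ov{\M}_g$ and has $\dim W=\dim\P\E^k_{g,1}-2$, so since the fibers of $\pi$ have dimension at most one it suffices to bound the dimension of the image of $W$ in $\ov{\M}_g$. The key input here is that $\ov{\M}^{\bm{a}}_{g,d}$ is irreducible and has points over smooth curves, so its fiber over a general point $[C]$ of any boundary divisor $\delta_h$ of $\ov{\M}_g$ is finite --- otherwise $\ov{\M}^{\bm{a}}_{g,d}$ would be the entire preimage of $\delta_h$ in $\ov{\M}_{g,1}$. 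For general $[C]\in\delta_h$ the curve $C$ has a single node and no exceptional component, so $\omega_C^{\otimes k}$ is base-point-free, the fiber of $\P\E^k_g$ over $[C]$ is $\P H^0(C,\omega_C^{\otimes k})$, and the fiber of $Z$ over $[C]$ --- finitely many admissible marked points, each imposing a nontrivial linear condition --- has dimension at most $N-2$. If $W$ dominated $\delta_h$, its general fiber over $\delta_h$ would have dimension $(\dim\P\E^k_{g,1}-2)-\dim\delta_h=N-1$, contradicting its containment in the fiber of $Z$; hence the image of $W$ in $\ov{\M}_g$ lies in the boundary but in no $\delta_h$, so it has dimension at most $\dim\ov{\M}_g-2$ and $\dim\pi(W)<\dim W$, giving $\pi_*[W]=0$. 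Combining the steps, $\pi_*\big(\big[\ov{\mathbb{H}}^k_{g, 1}\big]\cdot\varphi^*\big[\ov{\M}^{\bm{a}}_{g,d}\big]\big)=\pi_*[Z]=\pi_*[W_0]=\big[\ov{\H}^{\bm{a}}_{g,d}\big]$. The delicate point throughout is this boundary analysis, resting on the finiteness of the fibers of $\ov{\M}^{\bm{a}}_{g,d}$ over general points of the boundary divisors of $\ov{\M}_g$.
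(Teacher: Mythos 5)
Your argument reaches the same reduction as the paper's --- identify the pushforward with $\big[\ov{\H}^{\bm{a}}_{g,d}\big]$ over $\M_g$ via a generically-one-to-one, multiplicity-one component, then kill the possible excess supported over the boundary --- but the boundary step is carried out by a genuinely different mechanism. The paper fixes a \emph{general} point $(X,\mu)$ of each boundary divisor $\Delta_i\subset\P\E^k_g$ and shows its $\pi$-fiber misses $\ov{\mathbb{H}}^k_{g,1}\cap\varphi^{-1}\big(\ov{\M}^{\bm{a}}_{g,d}\big)$; this requires describing a general stable $k$-differential on a one-nodal curve (the $k$-residue condition on the two branches) and checking its zeros avoid the finitely many limit Brill--Noether special points, plus a separate degree argument on the rational bridge for $\Delta_0$. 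You instead fix a general $[C]\in\delta_h\subset\ov{\M}_g$ and bound the dimension of the fiber of the whole intersection cycle over $[C]$ by $N-2$, so that no excess component can dominate $\delta_h$. This is cleaner in two respects: the finiteness of the fiber of $\ov{\M}^{\bm{a}}_{g,d}$ over a general boundary point is extracted from irreducibility alone rather than from limit linear series, and you never need to know what a general stable $k$-differential on a nodal curve looks like --- only the upper bound coming from the containment of $\ov{\mathbb{H}}^k_{g,1}$ in the zero locus of the evaluation section. The trade-off is that the paper's argument yields more geometric information (exactly which boundary triples are excluded), while yours is a pure dimension count.

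There is one point you need to shore up, and it is exactly the point the paper spends most of its proof on. Your dimension bound requires that \emph{every} point $P$ of the finite fiber $F$ of $\ov{\M}^{\bm{a}}_{g,d}$ over a general $[C]\in\delta_h$ imposes a nontrivial condition, i.e.\ that the fiber of $\ov{\mathbb{H}}^k_{g,1}$ over $(C,P)$ has dimension at most $N-2$. Your justification --- that $\omega_C^{\otimes k}$ is base-point-free --- is false as stated when $k=1$ and $h\geq 1$: every section of $\omega_C$ vanishes at a disconnecting node, so the node \emph{is} a base point, and if the node belonged to $F$ the fiber of the evaluation-zero-locus there would be all of $\P H^0(C,\omega_C^{\otimes k})$, of dimension $N-1$, and your contradiction would evaporate. (For the point of $\ov{\M}_{g,1}$ over the node, the fiber of $\ov{\mathbb{H}}^k_{g,1}$ is in any case governed by the rational-bridge degeneration, not by naive evaluation on $C$.) So you must either invoke \cite[Theorem~(1.1)]{MR910206}, as the paper does, to guarantee that the pointed curve with a rational bridge is not a limit of Brill--Noether special points --- so that $F$ avoids the node and your base-point-freeness claim holds at all points of $F$ --- or give a separate argument at the node analogous to the paper's degree computation on the rational component for $\Delta_0$. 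With that input added, your proof is complete.
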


Before proving Proposition \ref{prop:BNirreducible}, we show how this implies Theorem \ref{thm:kW} and Corollary \ref{cor:kBN}. 
Recall the Weierstrass divisor class $\mathcal{W}_g$ from \eqref{eq:W}.

\begin{proof}[Proof of Theorem \ref{thm:kW}] 
For  $d=2g-2$, $r=g-1$, and \mbox{$\bm{a}=(0,1,\dots,g-2,g)$,} one has the specializations
$\ov{\H}^{\bm{a}}_{g,d} \equiv \ov{\W}^k_g $ in $\mathrm{Pic}\left(\P\E^k_{g} \right)$ and
$\ov{\M}^{\bm{a}}_{g,d} \equiv \mathcal{W}_g$ in $\mathrm{Pic}\left( \ov{\M}_{g,1} \right)$. Thus Proposition \ref{prop:BNirreducible} implies $\ov{\W}^k_g \equiv\pi_* \left( \left[\ov{\mathbb{H}}^k_{g, 1}\right]\cdot \varphi^*\,\mathcal{W}_g \right)$ in $\mathrm{Pic}\left( \P\E^k_{g} \right)$.
To compute this, consider first the intersection
\begin{multline*}
\left[\ov{\mathbb{H}}^k_{g, 1}\right]\cdot \varphi^*\,\mathcal{W}_g \\
= \left(k\,\psi -\eta\right) \left( {g+1 \choose 2} \psi - \lambda -\sum_{i=1}^{g-1}{g-i+1 \choose 2}\delta_i \right) \,\,\in\,\, A^2\left(\P\E^k_{g,1}\right),
\end{multline*}
where we used Lemma \ref{lem:Z1class}.
The push-forward via $\pi\colon \P\E^k_{g,1} \rightarrow\P\E^k_{g}$ is 
\begin{multline*}
\pi_* \left(\left[\ov{\mathbb{H}}^k_{g, 1}\right]\cdot \varphi^*\,\mathcal{W}_g \right) =  -g(g^2-1)\,\eta + k {g+1 \choose 2} \kappa_1 -k(2g-2)\,\lambda \\
 -\sum_{i=1}^{\lfloor g/2 \rfloor} k\left( (2i-1) {g-i+1 \choose 2} +(2g-2i-1) {i+1 \choose 2} \right)\delta_i
\end{multline*}
in $\mathrm{Pic}\left( \P\E^k_{g} \right)$.
Here, we used 
\begin{align*}
\kappa_1:=\pi_* \left(\psi^2\right), \qquad \pi_* \left(\psi\lambda\right)=(2g-2)\lambda, \qquad \pi_* \left(\psi\eta\right)=(2g-2)\eta, \\
\pi_* \left(\psi\delta_i\right)=(2i-1)\delta_i \,\,\,\mbox{and}\,\,\, \pi_* \left(\psi\delta_{g-i}\right)=(2g-2i-1)\delta_i \,\,\,\mbox{for $1\leq i \leq \lfloor g/2 \rfloor$.}
\end{align*}
Mumford's formula $\kappa_1=12 \lambda -\sum_{i=0}^{\lfloor g/2 \rfloor} \delta_i$ and simplifying yield
\begin{align*}
\pi_* \left(\left[ \ov{\mathbb{H}}^k_{g, 1}\right]\cdot \varphi^*\,\mathcal{W}_g \right) =& -g(g^2-1)\,\eta + k(6g^2+4g+2)\,\lambda - k{g+1 \choose 2}\delta_0 \\
&-\sum_{i=1}^{\lfloor g/2 \rfloor} k(g+3)i(g-i)\,\delta_i.
\end{align*}
The statement follows.
\end{proof}

\begin{proof}[Proof of Corollary \ref{cor:kBN}]
From Proposition \ref{prop:BNirreducible}, Lemma \ref{lem:Z1class}, and \eqref{eq:muBNnuW}, one has
\begin{align*}
\left[\ov{\H}^{\bm{a}}_{g,d}\right]
&=\pi_*\left( \left[\ov{\mathbb{H}}^k_{g, 1} \right]\cdot \varphi^*\left[ \ov{\M}^{\bm{a}}_{g,d} \right]\right) \\
&= \pi_*\big(\left(k\,\psi-\eta\right)\left(\mu_{d,g,\bm{a}}\, \mathcal{BN}_g+\nu_{d,g,\bm{a}}\,\mathcal{W}_g \right) \big)\\
&= k(2g-2)\,\mu_{d,g,\bm{a}}\, \mathcal{BN}_g+\nu_{d,g,\bm{a}}\,\pi_* \left(\left[\ov{\mathbb{H}}^k_{g, 1}\right]\cdot \varphi^*\,\mathcal{W}_g \right)
\end{align*}
in $\mathrm{Pic}\left( \P\E^k_{g} \right)$.
Again by Proposition \ref{prop:BNirreducible}, one has $\pi_* \left( \left[\ov{\mathbb{H}}^k_{g, 1}\right]\cdot \varphi^*\,\mathcal{W}_g \right)\equiv \ov{\W}^k_g$ in $\mathrm{Pic}\left( \P\E^k_{g} \right)$, hence the statement.
\end{proof}

Expanding the formula in Corollary \ref{cor:kBN}, we deduce:

\begin{corollary}
\label{cor:expcoeffBNdiv}
For $k\geq 1$, $g \geq 2$,  and   
 $\bm{a}: 0 \leq a_0 < \dots < a_r \leq d$  such that $\rho(g,r,d,\bm{a}) =-1$,
one has
\[
\left[\ov{\H}^{\bm{a}}_{g,d}\right] = c_\eta\,\eta + c_\lambda\,\lambda - \sum_{i=0}^{\lfloor g/2 \rfloor}c_i\,\delta_i \,\in\,\mathrm{Pic}\left(\P\E^k_{g} \right),
\]
where
\begin{align*}
c_\eta &= -g(g^2-1)\,\nu_{g,d,\bm{a}}, \\
c_\lambda &= 2(g-1)(g+3)k\,\mu_{g,d,\bm{a}} + 2(3g^2 + 2g +1)k\,\nu_{g,d,\bm{a}}, \\ 
c_0 &=  \frac{g^2-1}{3}k\,\mu_{g,d,\bm{a}} + \frac{g(g+1)}{2}k\,\nu_{g,d,\bm{a}}, \\ 
c_i &= 2i(g-i)(g-1)k\,\mu_{g,d,\bm{a}} + i(g-i)(g+3)k\,\nu_{g,d,\bm{a}} \quad \mbox{for $i\geq 1$,}
\end{align*}
and $\mu_{g,d,\bm{a}}$ and $\nu_{g,d,\bm{a}}$ are given by \eqref{eq:munu}.
\end{corollary}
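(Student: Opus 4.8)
The plan is to obtain Corollary \ref{cor:expcoeffBNdiv} as a purely formal consequence of Corollary \ref{cor:kBN} together with the explicit expression for $[\ov{\W}^k_g]$ in Theorem \ref{thm:kW} and the definition of $\mathcal{BN}_g$. First I would recall from Corollary \ref{cor:kBN} that
\[
\left[ \ov{\H}^{\bm{a}}_{g,d} \right] = 2k(g-1)\,\mu_{g,d,\bm{a}} \, \mathcal{BN}_g + \nu_{g,d,\bm{a}}  \left[\ov{\W}^k_g\right],
\]
so the task reduces to substituting the two known divisor classes on the right and collecting the coefficients of $\eta$, $\lambda$, $\delta_0$, and $\delta_i$ ($1\leq i\leq\lfloor g/2\rfloor$). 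I note that $\mathcal{BN}_g$, defined in \eqref{eq:BN} as a class in $\mathrm{Pic}(\ov{\M}_{g,1})$, is here being viewed via pullback to $\P\E^k_g$, so it contributes nothing to the $\eta$-coefficient; only the $[\ov{\W}^k_g]$ term carries $\eta$, which immediately gives $c_\eta = -g(g^2-1)\,\nu_{g,d,\bm{a}}$.

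Next I would read off the $\lambda$-coefficient: from $\mathcal{BN}_g$ it is $(g+3)$, multiplied by $2k(g-1)\mu_{g,d,\bm{a}}$, and from $[\ov{\W}^k_g]$ it is $k(6g^2+4g+2) = 2k(3g^2+2g+1)$, multiplied by $\nu_{g,d,\bm{a}}$; summing gives $c_\lambda = 2(g-1)(g+3)k\,\mu_{g,d,\bm{a}} + 2(3g^2+2g+1)k\,\nu_{g,d,\bm{a}}$. For $\delta_0$, the contribution from $\mathcal{BN}_g$ is $\frac{g+1}{6}$ and from $[\ov{\W}^k_g]$ is $k\binom{g+1}{2} = \frac{kg(g+1)}{2}$ (both entering with a minus sign, matching the stated form $-\sum c_i\delta_i$), so $c_0 = 2k(g-1)\mu_{g,d,\bm{a}}\cdot\frac{g+1}{6} + \nu_{g,d,\bm{a}}\cdot\frac{kg(g+1)}{2} = \frac{g^2-1}{3}k\,\mu_{g,d,\bm{a}} + \frac{g(g+1)}{2}k\,\nu_{g,d,\bm{a}}$. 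Finally, for $1\leq i\leq\lfloor g/2\rfloor$ the $\delta_i$-coefficient of $\mathcal{BN}_g$ is $i(g-i)$ and that of $[\ov{\W}^k_g]$ is $k(g+3)i(g-i)$, yielding $c_i = 2i(g-i)(g-1)k\,\mu_{g,d,\bm{a}} + i(g-i)(g+3)k\,\nu_{g,d,\bm{a}}$, as claimed.

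There is no genuine obstacle here — the statement is an expansion, and the proof is a one-line substitution followed by elementary algebraic simplification (e.g.\ $2(g-1)\cdot\frac{g+1}{6} = \frac{g^2-1}{3}$). The only point requiring a word of care is the bookkeeping of signs, since Theorem \ref{thm:kW} writes $[\ov{\W}^k_g]$ with explicit minus signs in front of the $\delta_i$ terms while the target formula collects everything as $-\sum c_i\delta_i$; one must check that the signs are consistent so that all $c_i$ are the stated (nonnegative) quantities. Accordingly I would simply write: "Substitute the formula of Theorem \ref{thm:kW} for $[\ov{\W}^k_g]$ and the definition \eqref{eq:BN} of $\mathcal{BN}_g$ into the identity of Corollary \ref{cor:kBN}, and collect the coefficients of $\eta$, $\lambda$, and $\delta_i$; the claimed expressions follow after elementary simplification." If desired, one can also remark that plugging in the values of $\mu_{g,d,\bm{a}}$ and $\nu_{g,d,\bm{a}}$ from \eqref{eq:munu} expresses all coefficients directly in terms of the numbers $n_{g,d,\bm{a}}$ and $n_{g-1,d,\bm{a}^i}$ from \eqref{eq:ngda}.
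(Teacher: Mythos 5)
Your proposal is correct and coincides with the paper's own derivation: the paper obtains Corollary \ref{cor:expcoeffBNdiv} precisely by expanding the identity of Corollary \ref{cor:kBN} using Theorem \ref{thm:kW} and the expression \eqref{eq:BN} for $\mathcal{BN}_g$, exactly as you do. Your coefficient bookkeeping (including the sign convention $-\sum c_i\,\delta_i$ and the simplification $2(g-1)\cdot\tfrac{g+1}{6}=\tfrac{g^2-1}{3}$) checks out.
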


We now turn to the proof of Proposition \ref{prop:BNirreducible}. 

\begin{proof}[Proof of Proposition \ref{prop:BNirreducible}] 
By definition, the locus ${\H}^{\bm{a}}_{g,d}$ is the only component of $\pi\left(\ov{\mathbb{H}}^k_{g, 1} \cap \varphi^{-1}\left(\ov{\M}^{\bm{a}}_{g,d}\right)\right)$ in the restriction of $\P\E^k_{g}$ over the locus of \textit{smooth} curves $\M_g$. Also, $\pi\colon {\mathbb{H}}^k_{g, 1} \cap {\M}^{\bm{a}}_{g,d}\rightarrow {\H}^{\bm{a}}_{g,d}$ is generically of degree one. This implies the statement over $\M_g$.

Using the exact sequence
\[
\mathrm{Pic}\left(\P\E^k_{g}\big|_{\ov{\M}_g\setminus \M_g} \right) \rightarrow \mathrm{Pic}\left(\P\E^k_{g} \right) \rightarrow \mathrm{Pic}\left(\P\E^k_{g}\big|_{\M_g} \right) \rightarrow 0,
\]
we deduce that one has
\begin{equation}
\label{eq:E}
\pi_*\left(\left[\ov{\mathbb{H}}^k_{g, 1}\right] \cdot \varphi^* \left[\ov{\M}^{\bm{a}}_{g,d}\right]\right) = \left[\ov{\H}^{\bm{a}}_{g,d}\right] +E \qquad \in\mathrm{Pic}\left( \P\E^k_{g} \right)
\end{equation}
for some  cycle $E$ which is either zero, or supported only over the divisor $\ov{\M}_g\setminus \M_g$  of singular curves. 
Write 
\[
\P\E^k_{g}\big|_{\ov{\M}_g\setminus \M_g}=\Delta_0\cup \cdots\cup\Delta_{\lfloor g/2 \rfloor},
\]
where $\Delta_0$ is the divisor whose general element consists of an irreducible nodal curve together with the class of a general stable $k$-differential, and $\Delta_i$ is the divisor 
whose general element consists of a reducible nodal curve with two components of genera $i$ and $g-i$ together with the class of a general stable $k$-differential,  for $1 \leq i \leq \lfloor g/2 \rfloor$.
To show that $E=0$, 
it is enough to argue that the preimage under $\pi$ of a \textit{general} element of $\Delta_i$, for $0 \leq i \leq \lfloor g/2 \rfloor$, is disjoint from $\ov{\mathbb{H}}^k_{g, 1} \cap \varphi^{-1}\left(\ov{\M}^{\bm{a}}_{g,d}\right)$. 

First, consider the case $i > 0$. Let $(X, \mu)$ be a general element of $\Delta_i$, i.e., $X$ is a nodal curve  obtained by identifying the marked points of two \textit{general} pointed curves $(C_1, Q_1)$ and $(C_2, Q_2)$ of  genus $i$ and  $g-i$, respectively, and $\mu$ is the class of a \textit{general} stable $k$-differential on $X$.
There are finitely many points on $X$ which are limits of a Brill-Noether special point $P$ on a nearby smooth curve $C$ such that $\bm{a}^{\ell}(P) \geq \bm{a}$ for some $\ell\in G^r_d(C)$. Such limit points are away from the node of $X$. That is, the stable pointed curve  obtained when a marked point in $X$ collides with the node, thus creating a rational bridge containing the marked point, is not a limit of Brill-Noether special pointed curves \cite[Theorem (1.1)]{MR910206}.
In particular, the codimension-two locus in $\pi^{-1}\left( \Delta_i\right)\subset \P\E^k_{g,1}$ whose general element consists of a curve with a pointed rational bridge and general stable $k$-differential is not a component of the intersection $\ov{\mathbb{H}}^k_{g, 1} \cap \varphi^{-1}\left( \ov{\M}^{\bm{a}}_{g,d}\right)$.
Moreover, since $\mu$ is general, the zeros of $\mu$  avoid the limits in $X$ of Brill-Noether special points. 
Indeed, $\mu$ is a general element of 
\begin{equation*}
\P \left(H^0\left(\omega^{\otimes k}_{C_1}\left(kQ_1\right)\right) \times H^0\left(\omega^{\otimes k}_{C_2}\left(kQ_2\right)\right)\right)
\end{equation*}
satisfying the \textit{$k$-residue condition}.  
That is, $\mu$ is the class of a pair of general $k$-differentials $\mu_1$ and $\mu_2$ on the two components $C_1$ and $C_2$ with poles of order $k$ at $Q_1$ and $Q_2$, respectively, and with appropriate scaling  such that
\[
\mathrm{Res}^k_{Q_1}(\mu_1) = (-1)^k \,\mathrm{Res}^k_{Q_2}(\mu_2).
\]
Here, since $\mu_i$ has a pole of order $k$ at $Q_i$, the $k$-residue $\mathrm{Res}^k_{Q_i}(\mu_i)$ is computed as the coefficient of $t_i^{-k}(dt_i)^k$ in the Laurent series expansion of $\mu_i$ at $Q_i$, for a formal coordinate $t_i$ at $Q_i$, with $i=1,2$. 
Hence, the zeros of $\mu$, equal to the union of the zeros of $\mu_1$ and $\mu_2$, are generically away from the limits in $X$ of Brill-Noether special points on nearby smooth curves.

Finally, consider the case $i=0$.
Let $(X,\mu)$ be a general element of $\Delta_0$ in $\P\E^k_{g}$, i.e., $X$ is an irreducible nodal curve and $\mu$ is the class of a general stable $k$-differential on $X$.
Pulling back via the normalization map $\widetilde{X}\rightarrow X$, a general stable $k$-differential on $X$ is identified with a general $k$-differential on $\widetilde{X}$ having poles of
order $k$ at the two preimages of the node.
First,  consider the stable pointed curve $(X_0, P_0)$ obtained when a marked point in $X$ collides with the node, thus creating a rational component containing the marked point $P_0$ and meeting the rest of the curve in two points.
We claim that the codimension-two locus in $\pi^{-1}\left( \Delta_0\right)\subset \P\E^k_{g,1}$ whose general element consists of such a pointed curve $(X_0, P_0)$ together with 
a general stable $k$-differential $\mu_0$ is not a component of the intersection $\ov{\mathbb{H}}^k_{g, 1} \cap \varphi^{-1}\left( \ov{\M}^{\bm{a}}_{g,d}\right)$.
Indeed, the restriction of $\mu_0$ to each component of $X_0$  has
poles of order $k$ at the two nodes. It follows that the restriction of $\mu_0$ to the rational component has no zeros for degree reasons, hence $\mu_0$ does not vanish at ~$P_0$. This implies that $(X_0, P_0, \mu_0)$ does not lie in $\ov{\mathbb{H}}^k_{g, 1}$, hence the claim.
It remains to consider marked points in $X$ that are away from the node.
For this, one argues similarly to the case $i>0$, as the zeros of $\mu$ generically avoid  the finitely many limits in $X$ of Brill-Noether special points on nearby smooth curves.

It follows that $E=0$ in \eqref{eq:E}, hence the statement.
\end{proof}


\section{Test curves} 
\label{sec:testfamilies}
We consider here the intersections of the loci $\ov{\H}^{\bm{a}}_{g,d}$ with various  curves in $\P\E^k_{g}$. These will serve a double purpose:  We provide  explicit examples of  restrictions of the loci $\ov{\H}^{\bm{a}}_{g,d}$ to  families of curves with $k$-differentials, and the resulting computations provide a second independent proof of Theorem~\ref{thm:kW} and Corollary~\ref{cor:expcoeffBNdiv}.

The classes $\eta, \lambda$, and $\delta_i$ for $i=0,\dots, \lfloor g/2 \rfloor$ freely generate $\mathrm{Pic}\left(\P\E^k_g \right)$ for $g\geq 3$, while for $g=2$,  $\mathrm{Pic}\left(\P\E^k_2 \right)$ is generated by $\eta, \lambda,\delta_0,\delta_1$ modulo the relation $10\lambda = \delta_0 + 2\delta_1$.
Hence, given  $g\geq 2$ and  $\bm{a}: 0 \leq a_0 < \dots < a_r \leq d$ such that $\rho(g, r, d, \bm{a}) =-1$,
we can write
\begin{equation}
\label{eq:Hcoeff}
\left[ \ov{\H}^{\bm{a}}_{g,d} \right] = c_\eta \,\eta + c_\lambda \,\lambda - \sum_{i=0}^{\lfloor g/2 \rfloor} c_i \,\delta_i \,\,\in\,\, \mathrm{Pic}_{\mathbb{Q}}\left(\P\E^k_{g}\right),\quad \mbox{for some $c_\eta, c_\lambda, c_i\in \mathbb{Q}$}.
\end{equation}
All the coefficients $c_\eta, c_\lambda, c_i$, with $0\leq i\leq \lfloor g/2 \rfloor$, are given by Theorem \ref{thm:kW} and Corollary \ref{cor:expcoeffBNdiv}. The intersections with the following  families allow one to independently recover these coefficients.

\subsection{A pencil of $k$-canonical divisors on a general curve} 
\label{sec:ceta}
Let $C$ be a general genus $g$ curve $k$-canonically embedded in $\P^{N}$, where $N=g-1$ if $k=1$, or $N=(g-1)(2k-1) -1$ if $k\geq 2$. Let $\Lambda \cong \P^{N-3}$ be a fixed general subspace in $\P^{N}$, and consider the one-dimensional family  of hyperplanes in $\P^N$ containing $\Lambda$. 
As every such hyperplane cuts a $k$-canonical divisor on $C$,
this family gives rise to a pencil of $k$-canonical divisors on $C$, hence a pencil $A$ in $\P\E^k_g$.
One has 
\[
A \cdot \eta = -1, \quad A \cdot \lambda = 0,  \quad A \cdot \delta_i = 0, \quad \mbox{for all $i$.} 
\]
The pencil $A$ intersects $\ov{\H}^{\bm{a}}_{g,d}$ transversely along the Brill-Noether special points $P$ in $C$ such that $\bm{a}^\ell(P) \geq  \bm{a}$ for some $\ell\in G^r_d(C)$, hence one has
\[
A \cdot \left[\ov{\H}^{\bm{a}}_{g,d}\right] = n_{g,  d, \bm{a}} 
\]
where $n_{g,d,\bm{a}}$ is the number defined in \eqref{eq:ngdadef}. Combining with \eqref{eq:munu}, this gives 
\begin{equation}
\label{eq:ceta}
c_\eta =  -g(g^2-1)\,\nu_{g,d,\bm{a}}.
\end{equation}

\subsection{Curves on K3 surfaces}
\label{sec:curvesonK3}
Next, we consider  a Lefschetz pencil of curves of genus $g\geq 3$ lying on a general K3 surface $S$ of degree $2g-2$ in $\mathbb{P}^g$. 
Let \mbox{$\mathcal{X}\rightarrow S$} be the blow-up at the $2g-2$ base-points of the pencil, and let \mbox{$\pi\colon\mathcal{X}\rightarrow \mathbb{P}^1$} be the corresponding family of curves.
Fix $k$ general sections in $S$ and let $\Sigma_1, \dots, \Sigma_k$ be their proper transform in $\mathcal{X}$. Since $S$ has canonical sections, the cycle $\Sigma_1+\cdots+\Sigma_k$ on $\mathcal{X}$ restricts to a $k$-canonical divisor on each fiber of $\pi$, hence this gives rise to a pencil $\tau\colon\mathbb{P}^1\rightarrow \P\E^k_{g}$.
The intersections with the generators are 
\[
\tau^* \eta = k, \qquad
\tau^* \lambda = g+1, \qquad
\tau^* \delta_0 = 6g+18, \qquad
\tau^* \delta_i = 0, \qquad \mbox{for $i>0$.}
\]
The intersections with $\lambda$ and the $\delta_i$ are classical \cite{cukierman1993curves, MR2123229}. The degree of $\eta$ can be computed by intersecting the relation $\omega_{\mathcal{X}/\P^1}^{\otimes k} = \pi^*\eta \otimes \mathcal{O}_\mathcal{X}(\Sigma_1 + \cdots + \Sigma_k)$ valid on $\mathcal{X}$ 
with the class of one of the $2g-2$ exceptional divisors (this is as in \cite{chen2018positivity}, see also \cite[Example 3.2]{gheorghita2018effective} for  a similar computation). 

To compute the intersection of the pencil $\tau$ with the divisor class $\left[\ov{\H}^{\bm{a}}_{g,d}\right]$, we start from the locus of corresponding Brill-Noether special points  in $\mathcal{X}$. 
After identifying $\ov{\M}_{g,1} \rightarrow \ov{\M}_{g}$ with the universal curve over $\ov{\M}_{g}$,
one has a moduli map $\mathcal{X}\rightarrow \ov{\M}_{g,1}$. The pull-back of the divisor class $\left[\ov{\M}_{g,d}^{\bm{a}}\right]$ on $\ov{\M}_{g,1}$ from \eqref{eq:muBNnuW} via such map is
\begin{align}
\label{eq:Xagd}
\begin{split}
\left[\calX^{\bm{a}}_{g,d}\right]:= &\nu_{g,d,\bm{a}}\,{g+1 \choose 2}\, c_1\left(\omega_{\mathcal{X}/\mathbb{P}^1}\right) \\
& +(\mu_{g,d,\bm{a}}\,(g+3) -\nu_{g,d,\bm{a}})\,  \lambda - \mu_{g,d,\bm{a}}\, \frac{g+1}{6}\,  \delta_0.
\end{split}
\end{align}
It follows that
\[
\tau^* \left[\ov{\H}^{\bm{a}}_{g,d}\right]  = \left[\calX^{\bm{a}}_{g,d}\right]\cdot \left([\Sigma_1]+ \cdots+[\Sigma_k]\right).
\]
By the adjunction formula, one has $\Sigma_j \equiv K_f \equiv f+ K_{\mathcal{X}}\equiv f+\sum_{i=1}^{2g-2}E_i$, for each $j=1,\dots,k$, 
where $f$ is  a fiber of $\pi$, and $E_i$ are the exceptional curves on~$\mathcal{X}$. 
This gives 
\begin{align}
\label{eq:K3Hagd}
\tau^* \left[\ov{\H}^{\bm{a}}_{g,d}\right]  = \left[\calX^{\bm{a}}_{g,d}\right]\cdot k\left[ f+\sum_{i=1}^{2g-2}E_i\right] =2k(g+1)(g-1)^2\,\nu_{g,d,\bm{a}}.
\end{align}
Here we used that the nonzero intersections are given by 
\begin{align*}
c_1\left(\omega_{\mathcal{X}/\mathbb{P}^1}\right) \cdot f = 2g-2, \quad
c_1\left(\omega_{\mathcal{X}/\mathbb{P}^1}\right) \cdot E_i=1,  \quad
\lambda\cdot E_i = \tau^* \lambda= g+1, \\
\delta_0\cdot E_i = \tau^* \delta_0= 6g+18, \mbox{ for each $i=1,\dots,2g-2$}.
\end{align*}
Note how \eqref{eq:K3Hagd} is independent of $\mu_{g,d,\bm{a}}$, as the pencil has zero intersection with the Brill-Noether class $\mathcal{BN}_g$ \cite{MR852158}.
It follows that the intersection with \eqref{eq:Hcoeff} gives
\begin{equation}
\label{eq:K3}
2k(g+1)(g-1)^2\,\nu_{g,d,\bm{a}} = k\,c_\eta +(g+1) \, c_\lambda - (6g+18)\,c_0.
\end{equation}

\subsection{A Du Val pencil}
We consider here a pencil of Du Val curves of genus $g\geq 2$, as introduced in \cite{arbarello2016explicit}.
We briefly review the definition. Let $S'$ be the blow-up of $\P^2$ at nine general points $P_1,\dots, P_9$ (the generality condition is explicitly given in \cite{arbarello2016explicit}), and consider the linear system on $S'$
\[
L_g:=\left|3g \ell -g\left( E_ 1 + \cdots + E_ 8\right) -(g-1) E_9 \right|
\]
where $\ell$ is the proper transform of a line in $\P^2$, and $E_1,\dots, E_9$ are the exceptional curves in $S'$.
There exists a unique curve $J'\equiv 3\ell -(E_ 1 + \cdots + E_ 9)$ in $S'$ corresponding to the unique smooth plane cubic curve passing through the points $P_1,\dots,P_9$, 
and for each $C'\in L_g$, one has $C'\cdot J'=1$. It follows that $C'\cap J'=:\{P_{10}\}$ is independent of $C'$, and thus $P_{10}$ is a base-point of~$L_g$.

Let $S\rightarrow S'$ be the blow-up at $P_{10}$. Denote still by $E_1,\dots, E_9$ the inverse image in $S$ of the exceptional curves in $S'$, and let $E_{10}$ be the exceptional curve in $S$ over $P_{10}$. Let $J$ and $C$ be the proper transforms of $J'$ and $C'\in L_g$. One has
\begin{align*}
-K_S &\equiv J \equiv 3\ell -(E_ 1 + \cdots + E_ {10}),\\
C &\equiv 3g \ell -g\left( E_ 1 + \cdots + E_ 8\right) -(g-1) E_9 - E_{10}.
\end{align*}
The linear system $|C|$ is base-point free and gives rise to a map \mbox{$S\rightarrow \overline{S}\subset \P^g$} contracting $J$ into an elliptic singularity. The surface $\overline{S}$ has canonical sections and deforms in $\P^g$ to a nonsingular K3 surface of degree $2g-2$ \cite{arbarello2015hyperplane}.

Consider a Lefschetz pencil in $|C|$. Let $\mathcal{X}\rightarrow S$ be the blow-up at the $C^2=2g-2$ base-points of the pencil, and let $\pi\colon \mathcal{X}\rightarrow \P^1$ be the corresponding family of curves. 
As in \S\ref{sec:curvesonK3}, consider the proper transforms $\Sigma_1, \dots, \Sigma_k$ in $\mathcal{X}$ of $k$ general sections in $S$.  Since $\ov{S}$ has canonical sections, the cycle $\Sigma_1+\cdots+\Sigma_k$ cuts $k$-canonical divisors on the fibers of $\pi$, hence this gives rise to a pencil $\tau\colon\mathbb{P}^1\rightarrow \P\E^k_{g}$.
The intersections with the generators are 
\begin{align*}
\tau^* \eta &= k, &
\tau^* \lambda &= g, &
\tau^* \delta_0 &= 6(g+1), \\
&&\tau^* \delta_1 &= 1, & 
\tau^* \delta_i &= 0,  \qquad\mbox{for $i>1$.}
\end{align*}
The intersections with $\lambda$ and the $\delta_i$ are computed in \cite{arbarello2016explicit}, and the intersection with $\eta$ is computed as in \S\ref{sec:curvesonK3}.
We note that there is a unique fiber $J+D$ of $\pi$ contributing to $\tau^* \delta_1$ and consisting of  the elliptic curve $J$   and a $g-1$ curve 
\[
D\equiv 3(g-1) \ell -(g-1)\left( E_ 1 + \cdots + E_ 8\right) -(g-2) E_9
\]
meeting in one point.

Consider the moduli map $\gamma\colon\mathcal{X}\rightarrow \ov{\M}_{g,1}$. Since
$\gamma^*\delta_1 =J$ and $\gamma^*\delta_{g-1} =D$,
the pull-back via $\gamma$ of the divisor class $\left[\ov{\M}_{g,d}^{\bm{a}}\right]$ expressed by \eqref{eq:muBNnuW}  is
\begin{align*}
\left[\calX^{\bm{a}}_{g,d}\right]:= &\nu_{g,d,\bm{a}}\,{g+1\choose 2}\, c_1\left(\omega_{\mathcal{X}/\mathbb{P}^1}\right) 
 +(\mu_{g,d,\bm{a}}\,(g+3) -\nu_{g,d,\bm{a}})\, \lambda \\
 &- \mu_{g,d,\bm{a}}\, \frac{g+1}{6}\, \delta_0
-\left(\mu_{g,d,\bm{a}}\,(g-1)+ \nu_{g,d,\bm{a}}\,{g \choose 2}\right)J\\
& -\left(\mu_{g,d,\bm{a}}\,(g-1) + \nu_{g,d,\bm{a}}\right)D.
\end{align*}
The intersection with the divisor class $\left[\ov{\H}^{\bm{a}}_{g,d}\right]$ is 
\[
\tau^* \left[\ov{\H}^{\bm{a}}_{g,d}\right]  = \left[\calX^{\bm{a}}_{g,d}\right]\cdot \left([\Sigma_1]+ \cdots+[\Sigma_k]\right).
\]
By the adjunction formula, one has 
\[
\Sigma_j \equiv K_f \equiv  f+ K_{\mathcal{X}}\equiv  f+K_S+\sum_{i=1}^{2g-2}F_i \equiv f-J+\sum_{i=1}^{2g-2}F_i,
\]
for each $j=1,\dots,k$, 
where $f$ is  a fiber of $\pi$, and $F_i$ are the exceptional curves over the base-points of the pencil in~$S$. 
This gives 
\begin{align}
\label{eq:DVHagd}
\tau^* \left[\ov{\H}^{\bm{a}}_{g,d}\right]  = \left[\calX^{\bm{a}}_{g,d}\right]\cdot k\left[ f-J+\sum_{i=1}^{2g-2}F_i\right] = k(2g-3)(g^2-1) \,\nu_{g,d,\bm{a}}.
\end{align}
The result is independent of $\mu_{g,d,\bm{a}}$, and indeed the pencil has zero intersection with the class $\mathcal{BN}_g$ \cite{arbarello2016explicit}.
In \eqref{eq:DVHagd}, we used that the nonzero intersections are given by 
\begin{align*}
c_1\left(\omega_{\mathcal{X}/\mathbb{P}^1}\right) \cdot f = 2g-2, \quad
c_1\left(\omega_{\mathcal{X}/\mathbb{P}^1}\right) \cdot F_i=1,  \quad
\lambda\cdot F_i = \tau^* \lambda= g, \\
\delta_0\cdot F_i = \tau^* \delta_0= 6(g+1),\quad
D\cdot F_i = \tau^* \delta_{1}= 1, \quad
 \mbox{for  $i=1,\dots,2g-2$}.
\end{align*}
For the intersection $D\cdot F_i$, note that $C\cdot D=2g-2$, hence the restrictions of all base-points of the pencil to the fiber $J+D$ of $\pi$ are in $D$.

Intersecting with \eqref{eq:Hcoeff}, we deduce the following relation on the coefficients of the  class $\left[\ov{\H}^{\bm{a}}_{g,d}\right]$:
\begin{equation}
\label{eq:DV}
k(2g-3)(g^2-1) \,\nu_{g,d,\bm{a}} = k\,c_\eta +g \, c_\lambda - 6(g+1)\,c_0 - \, c_1.
\end{equation}

\subsection{A pencil of hyperelliptic curves}
\label{sec:hyppencil}
We consider here a pencil of hyperelliptic curves, following \cite[pg.~361-363]{MR1070600}. Let $S\rightarrow \P^2$ be a double cover branched over a general smooth curve of degree $2g+2$ in $\P^2$. This gives a two-dimensional family of hyperelliptic curves of genus $g$. Consider a general pencil of hyperelliptic curves in $S$. 
 Such a pencil has two base points  \cite[pg.~361-363]{MR1070600}.
Let $\pi\colon\mathcal{X} \ra \P^1$ be the corresponding family of curves obtained by blowing-up the two base points.  
A choice of $k(g-1)$ lines in $\P^2$ gives a $k$-canonical divisor on each curve in the family after pulling back to $\mathcal{X}$. This gives rise to a pencil in $\P\E^k_g$. One has
\begin{align*}
\deg\eta &= k,  & \deg\lambda &=\frac{g(g+1)}{2}, \\
\deg\delta_0& = 2(g+1)(2g+1), & \deg\delta_i&=0 &\mbox{for $i\geq 1$.}
\end{align*}
The intersections with $\lambda$ and $\delta_i$ are computed  in \cite[pg.~361-363]{MR1070600}, and the intersection with $\eta$ can be computed as in \S \ref{sec:curvesonK3}.

Furthermore, let $\Sigma_i$, for $i=1,\dots, k(g-1)$, be the pullbacks to $\mathcal{X}$ of the fixed $k(g-1)$ lines in $\P^2$, let $f$ be a fiber of $\pi$, and  $E_1,E_2$ be the  two exceptional curves. By the adjunction formula, one has $\Sigma_i \equiv K_f \equiv f+E_1 + E_2$ on $\mathcal{X}$.
The pointed Brill-Noether divisor class $\big[\calX^{\bm{a}}_{g,d}\big]$ on $\mathcal{X}$ is as in  \eqref{eq:Xagd}.
Similarly to \S \ref{sec:curvesonK3}, the intersection of the pencil $\pi\colon\mathcal{X} \ra \P^1$ with the divisor class $\left[\ov{\H}^{\bm{a}}_{g,d}\right]$ equals 
\begin{align*} 
\big[\calX^{\bm{a}}_{g,d}\big] \cdot & \sum_{i=1}^{k(g-1)} [\Sigma_i ]= \big[\calX^{\bm{a}}_{g,d}\big] \cdot k(g-1)(f+E_1 + E_2) \\
&= kg(g+1)(g-1)^2 \,\nu_{g,d,\bm{a}} - \frac{1}{3}k(g+1)(g-1)^2(g-2)\,\mu_{g,d,\bm{a}}.
\end{align*}
Here we used that the nonzero intersections are given by 
\begin{align*}
c_1\left(\omega_{\mathcal{X}/\mathbb{P}^1}\right) \cdot f = 2g-2,\quad  
c_1\left(\omega_{\mathcal{X}/\mathbb{P}^1}\right) \cdot E_i=1, \quad 
\lambda\cdot E_i =\frac{g(g+1)}{2}, \\
\delta_0\cdot E_i = 2(g+1)(2g+1), \mbox{ for each $i=1,2$.}
\end{align*}
 It follows that the coefficients of the  class $\left[\ov{\H}^{\bm{a}}_{g,d}\right]$ in \eqref{eq:Hcoeff} satisfy
\begin{equation}
\begin{split}
\label{eq:hyp}
kg(g+1)(g-1)^2 \,\nu_{g,d,\bm{a}} -& \frac{1}{3}k(g+1)(g-1)^2(g-2)\,\mu_{g,d,\bm{a}} \\
&= k\,c_\eta +\frac{g(g+1)}{2}\,c_\lambda - 2(g+1)(2g+1)\,c_0.
\end{split}
\end{equation}

\subsection{Pull-back to $\overline{\mathcal{M}}_{0,g}$}
For $g\geq 4$, we define here a map
\[
\xi\colon \overline{\mathcal{M}}_{0,g} \rightarrow \P\E^k_g.
\]
Consider an elliptic curve $J$ with distinguished point $Q\in J$, and let $\mu_J$ be a general $k$-differential on $J$ with a pole of order $k-1$ at $Q$ and regular elsewhere.
Specifically, given a degree $d>0$, it will be enough to select $\mu_J\in H^0\left(J, \mathscr{O}_J((k-1)Q)\right)$ which does not vanish at the finitely many points $P\in J$ such that $\mathscr{O}_J(P-Q)$ is an $e$-torsion point in $\mathrm{Pic}^0(J)$ for some $e\leq d$.
Then $\xi$ is defined by mapping
 a stable $g$-pointed rational curve to  the stable curve of genus $g$ obtained by attaching $g$ copies of $J$ at the $g$ marked points, and assigning the stable $k$-differential which restricts as $\mu_J$ on each copy of $J$ and is zero on all rational components.

Let $\epsilon_i$ be the class of the divisor in $\overline{\mathcal{M}}_{0,g}$  whose generic element consists of two rational components, one of them containing exactly $i$ marked points, for $2\leq i \leq \lfloor g/2 \rfloor$. One has $\xi^* \eta =0$, and from \cite{MR910206}:
\[
 \xi^* \lambda =0, 
\quad
\xi^* \delta_1 =-\sum_{i=2}^{\lfloor g/2 \rfloor} \frac{i(g-i)}{g-1}\epsilon_i,
\quad
 \xi^* \delta_i=\epsilon_i, \,\, \mbox{ for $2\leq i \leq \lfloor g/2 \rfloor$}.
\]

\begin{lemma}
For $g\geq 4$ and  $\bm{a}: 0 \leq a_0 < \dots < a_r \leq d$ such that $\rho(g, r, d, \bm{a}) <0$,
the  locus $\ov{\H}^{\bm{a}}_{g,d}$ in $\P\E^k_g$ is disjoint from the image of $\xi$. 
In particular, when $\rho(g, r, d, \bm{a}) =-1$, it follows that
$\xi^*\left[\ov{\H}^{\bm{a}}_{g,d}\right]=0$ in $\mathrm{Pic}\left( \overline{\mathcal{M}}_{0,g} \right)$, hence one has
\begin{equation}
\label{eq:ci}
c_i=\frac{i(g-i)}{g-1}\,c_1, \qquad \mbox{for $2\leq i \leq \lfloor g/2 \rfloor$}.
\end{equation}
\end{lemma}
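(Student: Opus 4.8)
The plan is to bound $\ov{\H}^{\bm{a}}_{g,d}$ from above by a push-forward, reduce the disjointness to locating the limiting Brill--Noether special point on a curve in the image of $\xi$, rule out every possible position of that point, and finally deduce \eqref{eq:ci} formally. Since $\pi$ is proper, unwinding the definitions of $\ov{\mathbb{H}}^k_{g,1}$ and $\ov{\M}^{\bm{a}}_{g,d}$ (compare the proof of Proposition~\ref{prop:BNirreducible}) gives $\ov{\H}^{\bm{a}}_{g,d}\subseteq\pi\bigl(\ov{\mathbb{H}}^k_{g,1}\cap\varphi^{-1}(\ov{\M}^{\bm{a}}_{g,d})\bigr)$. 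So it suffices to show that for $(X,[\mu])$ in the image of $\xi$ --- with $X=R\cup J_1\cup\cdots\cup J_g$ the rational spine $R$ joined to $g$ copies of $J$ at the points $Q_1,\dots,Q_g$, and $\mu$ restricting to a nonzero multiple of $\mu_J$ on each $J_i$ and vanishing on $R$ --- there is no lift $(X',P_0,[\mu'])\in\ov{\mathbb{H}}^k_{g,1}$ with $(X',P_0)\in\ov{\M}^{\bm{a}}_{g,d}$ and $\pi(X',P_0,[\mu'])=(X,[\mu])$; here the stable pointed curve $X'$ is $X$ with $P_0$ a smooth point of $X$, or $X$ with a rational bridge inserted at one of the nodes $Q_i$ and carrying $P_0$.

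I would then rule out each possibility for $P_0$. If $P_0$ lies on a rational bridge of $X'$, then $(X',P_0)$ is not a limit of Brill--Noether special pointed curves by \cite[Theorem~(1.1)]{MR910206}, exactly as in the proof of Proposition~\ref{prop:BNirreducible} (equivalently, the order conditions at the two nodes of the bridge imposed by the incidence variety compactification \cite{bcggm1,bcggm} leave the twisted $k$-differential with no zero on the bridge, so the lift does not lie in $\ov{\mathbb{H}}^k_{g,1}$). If $P_0$ lies on the spine $R$, then since $X$ is of compact type, $(X,P_0)\in\ov{\M}^{\bm{a}}_{g,d}$ would produce a limit $\mathfrak{g}^r_d$ on $X$ with ramification $\geq\bm{a}$ at $P_0$; its aspects on $R\cong\P^1$ and on the general elliptic curves $J_i$ all have nonnegative adjusted Brill--Noether numbers, yet the Eisenbud--Harris additivity of adjusted Brill--Noether numbers bounds their sum by $\rho(g,r,d,\bm{a})<0$, a contradiction. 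Finally, suppose $P_0$ lies on a tail $J_i$. Since $\mu$ does not vanish on $J_i$, that component sits at the top level of the twisted differential associated with $(X,P_0,[\mu])$, and as $P_0$ is a smooth point of $X$ away from the node, membership in $\ov{\mathbb{H}}^k_{g,1}$ forces $\mu_J(P_0)=0$; on the other hand, the same additivity argument now forces the aspect number on $J_i$, namely $\rho\bigl(1,r,d,\bm{c}_i\ \text{at}\ Q_i,\ \bm{a}\ \text{at}\ P_0\bigr)$, to be negative even though the aspect exists on the general pointed elliptic curve $(J_i,Q_i)$; comparing the line-bundle class of this aspect with its forced vanishing at $Q_i$ and $P_0$ shows that this can happen only if $\mathscr{O}_{J_i}\bigl(e(P_0-Q_i)\bigr)\cong\mathscr{O}_{J_i}$ for some $e\leq d$. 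But $\mu_J$ was chosen not to vanish at any such torsion point, contradicting $\mu_J(P_0)=0$. This proves the disjointness.

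For the second assertion, disjointness gives $\xi^*\bigl[\ov{\H}^{\bm{a}}_{g,d}\bigr]=0$ in $\mathrm{Pic}\bigl(\overline{\mathcal{M}}_{0,g}\bigr)$ when $\rho(g,r,d,\bm{a})=-1$. Writing $\bigl[\ov{\H}^{\bm{a}}_{g,d}\bigr]=c_\eta\,\eta+c_\lambda\,\lambda-\sum_{i=0}^{\lfloor g/2\rfloor}c_i\,\delta_i$ and pulling back --- using $\xi^*\eta=\xi^*\lambda=\xi^*\delta_0=0$ (the last because the curves in the image of $\xi$ are of compact type), $\xi^*\delta_1=-\sum_{i=2}^{\lfloor g/2\rfloor}\tfrac{i(g-i)}{g-1}\epsilon_i$, and $\xi^*\delta_i=\epsilon_i$ for $i\geq2$ --- one obtains $0=\sum_{i=2}^{\lfloor g/2\rfloor}\bigl(\tfrac{i(g-i)}{g-1}c_1-c_i\bigr)\epsilon_i$, and \eqref{eq:ci} follows from the linear independence of the classes $\epsilon_i$, $2\leq i\leq\lfloor g/2\rfloor$, in $\mathrm{Pic}\bigl(\overline{\mathcal{M}}_{0,g}\bigr)$.

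The main obstacle is the tail case: making rigorous that an aspect with excess ramification at $Q_i$ and $P_0$ on a general elliptic curve forces $P_0-Q_i$ to be $e$-torsion with $e\leq d$, and that the resulting finite set of torsion points is precisely the one avoided by the choice of $\mu_J$. This combines the Eisenbud--Harris limit linear series theory on compact-type curves (including the crude-versus-refined dichotomy and the additivity inequality) with an explicit description of linear series on elliptic curves in terms of effective divisors in a fixed linear system; the smallest case $g=4$ should also be checked directly.
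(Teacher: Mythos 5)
Your strategy coincides with the paper's: an element of the image of $\xi$ can lie in $\ov{\H}^{\bm{a}}_{g,d}$ only if a twisted $k$-differential of type $\mu$ vanishes at a limit of a Brill--Noether special point; such limits avoid the rational components and land on the elliptic tails only at points $P$ with $\mathscr{O}_J(P-Q)$ an $e$-torsion class for some $e\leq d$; and $\mu_J$ was chosen to avoid exactly those points. The deduction of \eqref{eq:ci} from $\xi^*\eta=\xi^*\lambda=\xi^*\delta_0=0$ and the stated pullbacks of $\delta_1,\delta_i$ is the same formal computation as in the paper. The difference is that where the paper simply cites \cite{EH-1} for ``points on rational components are always Brill--Noether general'' and \cite{MR846932}, \cite{osserman2014simple} for the torsion-point characterization on elliptic tails, you attempt to rederive both, and two of those steps are under-justified as written. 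First, in the spine case you assert that the aspect on $R$ has nonnegative adjusted Brill--Noether number; since $R$ carries $g\geq 4$ nodes (and possibly $P_0$), this is not the one-pointed genus-$0$ statement recalled in \S\ref{sec:BNdiv} but rather the Eisenbud--Harris dimensional-transversality theorem for Schubert conditions relative to osculating flags at arbitrary distinct points of a rational normal curve; moreover $R$ may itself be a tree of rational curves (the image of $\xi$ includes the boundary of $\overline{\mathcal{M}}_{0,g}$), so the additivity must be run over all components of that tree. Second, the elliptic-tail step --- that an aspect with negative adjusted $\rho$ relative to $\{Q_i,P_0\}$ on a general pointed elliptic curve forces $e(P_0-Q_i)\sim 0$ for some $e\leq d$ --- is precisely the content of the paper's citations, and you acknowledge leaving it as a sketch. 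Both facts are true, so your proof is correct in outline and reaches the right conclusion; to make it complete you should either cite these results as the paper does or supply the transversality and torsion arguments in full.
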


\begin{proof}
Let $(X,\mu)$ in $\P\E^k_g$ be an arbitrary element in the image of $\xi$.
For $(X,\mu)$ to be in $\ov{\H}^{\bm{a}}_{g,d}$, there need to be a twisted $k$-differential of type $\mu$ (in the sense of \cite{bcggm1, bcggm}) vanishing at a limit in $X$ of a Brill-Noether special point of some linear series of type $\mathfrak{g}^r_d$ on a nearby smooth curve.
Since the restriction $\mu_J$ of $\mu$ to each elliptic tail of $X$ is generically nonzero, all twisted $k$-differentials of type $\mu$  
restrict as $\mu_J$ on each elliptic tail.
 Points on rational components of  $X$  are always Brill-Noether general \cite{EH-1}, and Brill-Noether special points of linear series of type $\mathfrak{g}^r_d$
 specialize to points $P$ on elliptic tails of $X$ such that $\mathscr{O}_J(P-Q)$ is an $e$-torsion point in $\mathrm{Pic}^0(J)$ for some $e\leq d$ \cites{MR846932}[Lemma 2.1]{osserman2014simple}.
 By assumption, $\mu_J$ vanishes away from such points, hence
 the image of the map $\xi$ is disjoint from the locus $\ov{\H}^{\bm{a}}_{g,d}$ in $\P\E^k_g$.
\end{proof}

\subsection{Second proof of Theorem \ref{thm:kW} and Corollary \ref{cor:expcoeffBNdiv}}
Expanding the class $\left[\ov{\H}^{\bm{a}}_{g,d}\right]$ as in \eqref{eq:Hcoeff}, the relations in \eqref{eq:ceta}, \eqref{eq:K3}, \eqref{eq:DV}, \eqref{eq:hyp}, \eqref{eq:ci}
allow one to determine all the coefficients $c_\eta, c_\lambda, c_i$, with $0\leq i\leq \lfloor g/2 \rfloor$ modulo the required relation $10\lambda = \delta_0 + 2\delta_1$ in the case $g=2$.
\hfill$\square$


\section{Extremality and rigidity of the incidence divisor}

In this section we prove Theorem \ref{thm:Z1extremal}. To deduce extremality, we  use: 

\begin{lemma}[{\cite[Lemma 4.1]{MR3071469}}]
\label{lem:extremality}
Let $D$ be an irreducible effective divisor in a projective variety~$X$, and let $\mathscr{C}$ be a set of irreducible effective curves contained in $D$ such that $\bigcup_{C\in \mathscr{C}} C$ is Zariski dense in $D$. If for every curve $C$ in $\mathscr{C}$ one has
\[ 
C \cdot (D + B) \leq 0, \qquad \mbox{for a fixed big divisor class $B$ on $X$,}
\] 
then $D$ is extremal in $\ov{\mathrm{Eff}}^1(X)$.
\end{lemma}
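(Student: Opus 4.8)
The plan is to prove this general extremality criterion directly from two geometric inputs extracted from the hypotheses, and then to run a ``maximal multiple'' argument on the summands $D_1,D_2$ using only the closedness and salience of the pseudo-effective cone together with Kodaira's lemma. First I would record the two consequences of the hypotheses that drive everything. (i) For a general $C\in\mathscr{C}$ one has $D\cdot C<0$: since $B$ is big, Kodaira's lemma lets me write $mB\equiv A+F$ with $A$ ample and $F$ effective, and because the curves $C$ sweep out a dense subset of $D$ (so a general one is not contained in $\operatorname{Supp}F$, assuming $D\not\subseteq\mathbf{B}_+(B)$) one gets $B\cdot C>0$; the hypothesis $C\cdot(D+B)\le 0$ then forces $D\cdot C\le -B\cdot C<0$. (ii) For any prime divisor $\Gamma\neq D$ and a general $C\in\mathscr{C}$ one has $\Gamma\cdot C\ge 0$, because $\Gamma\cap D$ is a proper closed subset of the irreducible divisor $D$ while the $C$'s cover $D$, so a general $C$ is not a component of $\Gamma$. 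Note that the intersection number $D\cdot C$ is constant along an irreducible subfamily, so these ``general'' statements are really uniform ones.

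Next, given a decomposition $D\equiv D_1+D_2$ with $D_1,D_2\in\ov{\mathrm{Eff}}^1(X)$, I would set $a:=\sup\{t\in\R: D_1-tD\in\ov{\mathrm{Eff}}^1(X)\}$. This supremum is finite and attained: it is attained because the pseudo-effective cone is closed, and it is finite because otherwise $\tfrac1t(D_1-tD)\to -D$ would be pseudo-effective, which together with $D$ pseudo-effective and the salience of the cone would give $D\equiv 0$, contradicting $D\cdot C<0$. Write $M:=D_1-aD$, a pseudo-effective class. The crucial claim is that $M\cdot C\ge 0$ for general $C$. If instead $M\cdot C<0$, I would apply Kodaira's lemma to the big class $M+\epsilon B$: writing a suitable multiple $N(M+\epsilon B)$ as an effective divisor $cD+R$ and isolating its $D$-coefficient $c$, input (ii) gives $R\cdot C\ge 0$, and input (i) then converts $M\cdot C<0$ into a \emph{uniform} positive lower bound $\delta>0$ on $c/N$ as $\epsilon\to 0$. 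Since $M+\epsilon B-\tfrac{c}{N}D=\tfrac1N R$ is pseudo-effective and $\tfrac{c}{N}\ge\delta$, also $M+\epsilon B-\delta D$ is pseudo-effective; letting $\epsilon\to 0$ and using closedness of the cone produces the pseudo-effective class $M-\delta D=D_1-(a+\delta)D$, contradicting the maximality of $a$. Hence $M\cdot C\ge 0$. The same construction applied to $D_2$ yields $b$ and $M':=D_2-bD$ pseudo-effective with $M'\cdot C\ge 0$.

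Finally, from $D\equiv D_1+D_2$ I obtain $(1-a-b)D\equiv M+M'$. Intersecting with a general $C$ and using $(M+M')\cdot C\ge 0$ together with $D\cdot C<0$ forces $a+b\ge 1$. If $a+b>1$, then $M+M'\equiv -(a+b-1)D$ is simultaneously pseudo-effective and the negative of a nonzero pseudo-effective class, so salience gives $M+M'\equiv 0$ and hence $(a+b-1)D\equiv 0$, i.e.\ $D\equiv 0$, contradicting $D\cdot C<0$. Therefore $a+b=1$ and $M+M'\equiv 0$; since $M$ and $M'=-M$ are both pseudo-effective, salience again gives $M\equiv M'\equiv 0$. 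Thus $D_1\equiv aD$ and $D_2\equiv(1-a)D$, proving that $D$ is extremal in $\ov{\mathrm{Eff}}^1(X)$.

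The main obstacle, and the reason a \emph{big} class $B$ is built into the hypothesis, is the passage from negativity against the covering curves to control of the \emph{numerical} summands: a pseudo-effective class has no intrinsic ``$D$-component,'' so negativity of $M\cdot C$ cannot be read off directly. The device that overcomes this is the Kodaira-type effective representative of $M+\epsilon B$, available precisely because $B$ is big, whose honest divisorial components split into a $D$-part and a remainder controlled by input (ii); the salience and closedness of $\ov{\mathrm{Eff}}^1(X)$ then supply the rigidity of the cone needed to force the movable remainders to vanish. A secondary point to verify is input (i), namely that a general $C\in\mathscr{C}$ has $B\cdot C>0$; this holds as soon as $D\not\subseteq\mathbf{B}_+(B)$, which one can confirm in practice via Nakamaye's description of the augmented base locus.
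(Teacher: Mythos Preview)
The paper does not prove this lemma; it is quoted verbatim from \cite[Lemma~4.1]{MR3071469} and used as a black box. So there is no ``paper's proof'' to compare with---your write-up is being judged on its own.

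Your overall architecture (pass to the threshold $a=\sup\{t:D_1-tD\in\ov{\mathrm{Eff}}^1\}$, set $M=D_1-aD$, show $M\equiv 0$ via salience) is a sound and standard strategy, and your use of Kodaira's lemma to produce effective representatives $N(M+\epsilon B)=cD+R$ together with input~(ii) is exactly right. The salience/closedness arguments in the last paragraph are clean.

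The genuine gap is the one you yourself flag: input~(i), namely $D\cdot C<0$ for general $C\in\mathscr{C}$, is \emph{not} a consequence of the stated hypotheses. You obtain it only after inserting the extra assumption $D\not\subseteq\mathbf{B}_+(B)$, and then at the end you downgrade this to ``a secondary point to verify in practice.'' But the lemma is asserted in full generality, so this is not a side remark---it is a hole in the proof. Concretely: writing $mB\equiv A+F$, every curve $C\in\mathscr{C}$ lies in $D$, so if $D\subseteq\operatorname{Supp}F$ (which is exactly the case $D\subseteq\mathbf{B}_+(B)$) then \emph{all} $C$ lie in $F$ and $B\cdot C$ can be arbitrarily negative; nothing then forces $D\cdot C<0$. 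Your argument uses $D\cdot C<0$ twice in an essential way: once to extract the uniform lower bound $c/N\ge\delta$ in the ``crucial claim,'' and once at the end to pass from $(M+M')\cdot C\ge 0$ to $a+b\ge 1$. Without it, both steps stall. (Replacing $B$ by $B-\epsilon A$ to make the inequality strict does not help: it gives $(D+B')\cdot C<0$, but still says nothing about the sign of $D\cdot C$ when $D\subseteq\mathbf{B}_+(B')$.)

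To repair this you must either (a) prove $D\cdot C<0$ directly from the stated hypotheses---for instance by showing that $(D+B)\cdot C\le 0$ for a covering family already forces $D$ out of $\mathbf{B}_+(B)$---or (b) rework the endgame so that it does not rely on the sign of $D\cdot C$, e.g.\ by arguing more carefully with the divisorial Zariski/Nakayama decomposition that $\sigma_D(M)=0$ and $\sigma_D(M')=0$ force $M+M'$ to have no $D$-component, contradicting $M+M'\equiv(1-a-b)D$ unless $a+b\ge 1$. As written, the proof establishes only a weaker statement than the one claimed.
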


To review the notation, a divisor class on $X$ is \textit{big} if it lies in the interior of the pseudo-effective cone $\ov{\mathrm{Eff}}^1(X)$.
We emphasize that in Lemma \ref{lem:extremality}, the curves in $\mathscr{C}$ are not required to be moving in $D$. We apply  Lemma \ref{lem:extremality} below to deduce the extremality of the divisor $\ov{\mathbb{H}}^k_{g, 1}$ using a set $\mathscr{C}$ of Teichm\"uller curves  in $\ov{\mathbb{H}}^k_{g, 1}$ available  for $k=1,2$. While such Teichm\"uller curves are not moving in $\ov{\mathbb{H}}^k_{g, 1}$, their union is indeed dense in $\ov{\mathbb{H}}^k_{g, 1}$.

\begin{proof}[Proof of Theorem \ref{thm:Z1extremal}]
First, we  show the extremality for $k=1$. Let $\mathcal{C}\subset \P\E^1_{g,1}$ be the closure of a Teichm\"uller curve generated by some element $(C, P, \mu)$ 
in $\mathbb{H}^1_{g, 1}$ such that $C$ is smooth and $\mu$ is a canonical divisor on $C$ supported at distinct points, including $P$.
By varying $(C, P, \mu)$,
the collection $\mathscr{C}$ of such Teichm\"uller curves $\mathcal{C}$ 
is dense in $\ov{\mathbb{H}}^1_{g, 1}$.
From \cite{MR3033521}, one has for each $\mathcal{C}\in \mathscr{C}$:
\begin{align}
\begin{split}
\label{eq:intersectionCTeich}
&\mathcal{C} \cdot \lambda = -\frac{\chi}{2}L, \qquad\qquad\qquad\qquad\qquad\,\,
\mathcal{C} \cdot \delta_0 = -\frac{3\,\chi}{2}(4L - g+1), \\
&\mathcal{C} \cdot \psi = 2\, \frac{\mathcal{C} \cdot \lambda - \mathcal{C} \cdot \delta_0/12}{g-1} = -\frac{\chi}{4}, \qquad
\mathcal{C} \cdot \delta_i = 0, \quad\mbox{ for $i \geq 1$,}
\end{split}
\end{align}
where $\chi$ is the Euler characteristic of $\mathcal{C}$ (a different sign convention is used for $\chi$ in \cite{MR3033521}), and $L$ is the sum of the top $g$ Lyapunov exponents of~$\mathcal{C}$ (we refer to \cite[\S 1.4]{eskin2014sum} for more on Lyapunov exponents). 
Specifically, $\mathcal{C}$ is disjoint from $\delta_i$ for $i>0$ from \cite[Cor.~3.2]{MR3033521}.
For any Teichm\"uller curve, the intersections with $\lambda$ and $\delta_0$  are computed in terms of the area Siegel-Veech constant $c_{\rm{SV}}(\mathcal{C})$ of $\mathcal{C}$ \cite[\S 1.6]{eskin2014sum}: namely,
  $\deg \delta_0 = -6\chi\, c_{\rm{SV}}(\mathcal{C})$ from \cite[Proof of Prop.~4.5]{MR3033521} and $\deg \lambda = \frac{L}{12\, c_{\rm{SV}}(\mathcal{C})} \deg \delta_0$ from \cite[Prop.~4.5]{MR3033521}.
Since $\pi(\mathcal{C})$ is generically contained in the principal stratum of $\P\E^1_g$ consisting of curves with canonical divisors supported at \textit{distinct} points, where $\pi\colon \P\E^1_{g,1}\rightarrow \P\E^1_g$ is the forgetful map, one has $c_{\rm{SV}}(\mathcal{C})=L-\frac{1}{4}(g-1)$ from \cites[Thm.~1]{eskin2014sum}[Thm.~4.4]{MR3033521}, hence the intersections with $\lambda$ and $\delta_0$ in \eqref{eq:intersectionCTeich} follow. The intersection with $\psi$ follows from \cite[Prop.~4.8]{MR3033521} using that for the generator $(C,P,\mu)$ of $\mathcal{C}$ the support of the canonical divisor $\mu$ contains  $P$ with multiplicity one.

Moreover, from \cites{moller2006variations}[(15)]{MR3033521}, one has
\[
\mathcal{C} \cdot \eta = -\frac{\chi}{2}.
\]
Combining this with Lemma \ref{lem:Z1class}, we have  
\[
\mathcal{C} \cdot \ov{\mathbb{H}}^1_{g, 1} = \mathcal{C} \cdot (\psi - \eta) = \frac{\chi}{4}.
\] 

Since $\psi$ is ample on any nonconstant family not all of whose elements are singular \cite[Thm.~6.33]{MR1631825}, one has $\mathcal{C} \cdot \psi>0$.   
We deduce  $\chi < 0$ and thus $\mathcal{C} \cdot \ov{\mathbb{H}}^1_{g, 1} <0$. We conclude that $\mathcal{C}\subset \ov{\mathbb{H}}^1_{g, 1}$ for all $\mathcal{C}\in \mathscr{C}$.

To deduce the extremality of $\ov{\mathbb{H}}^1_{g, 1}$, we apply Lemma \ref{lem:extremality} for a certain big divisor class $B$ on $\P\E^1_{g,1}$.
Select an ample divisor class 
 \[ 
 A := c_\eta \,\eta + c_\lambda \,\lambda + c_\psi \,\psi + \sum_{i=0}^{g-1} c_i\,\delta_i \,\,\in\,\, \mathrm{Pic}\left(\P\E^1_{g,1}\right),
 \] 
and define $B:=dA$ for a sufficiently small  $d>0$ such that $\mathcal{C} \cdot (\ov{\mathbb{H}}^1_{g, 1} + B) \leq 0$ for all $\mathcal{C} \in \mathscr{C}$. Namely, let 
 \[ 
 d := \inf_{\mathcal{C}\in \mathscr{C}} \Bigg\{ \frac{1}{2c_\eta + c_\psi -6(g-1)c_0 +2L(c_\lambda +12c_0)} \Bigg\}. \] 
 The expression in the brackets comes from solving for $d$ in $\mathcal{C} \cdot (\ov{\mathbb{H}}^1_{g, 1} + dA) = 0$ for a given Teichm\"uller curve, and it is positive since $\mathcal{C} \cdot \ov{\mathbb{H}}^1_{g, 1} <0$ and $\mathcal{C} \cdot A >0$. Furthermore, the value in the brackets depends only on the sum $L$ of the top $g$ Lyapunov exponents of $\mathcal{C}$. Since  $0\leq L\leq g$ (as the top $g$ Lyapunov exponents are nonnegative and at most $1$ \cites[\S 5]{kontsevich1997lyapunov}[\S 1.4]{eskin2014sum}), the infimum here is indeed  positive, and thus $B=dA$ is big. The extremality of $\ov{\mathbb{H}}^1_{g, 1}$ follows by  Lemma \ref{lem:extremality}.
 
\smallskip 
 
Next, we show the extremality for  $k=2$. The argument is similar to the previous case. Let $\mathcal{C}\subset \P\E^2_{g,1}$ be the closure of a Teichm\"uller curve generated by some  $(C, P, \mu)$ in $\mathbb{H}^2_{g, 1}$ such that $C$ is smooth and $\mu$ is the class of a quadratic differential on $C$ vanishing at distinct points, including $P$, and $\mu$ is not the class of the square of an Abelian differential \cite[\S 4.1]{chen2014quadratic}. The collection $\mathscr{C}$ of such Teichm\"uller curves $\mathcal{C}$ is dense in $\ov{\mathbb{H}}^2_{g, 1}$.
From \cite[Prop.~4.2 and (2)]{chen2014quadratic}, one has
\begin{align}
\label{eq:intersectionCTeich2}
\mathcal{C} \cdot \lambda = -\frac{\chi}{36}\big(18\, c_{\rm{SV}}(\mathcal{C}) + 5(g-1)\big), \quad
\mathcal{C} \cdot \delta = -6\,\chi \, c_{\rm{SV}}(\mathcal{C}), \quad
\mathcal{C} \cdot \psi = -\frac{\chi}{3}, 
\end{align}
where, as before, $\chi$ and $c_{\rm{SV}}(\mathcal{C})$ are the Euler characteristic and the area Siegel-Veech constant of $\mathcal{C}$, respectively (a different sign convention is used for $\chi$ in \cite{chen2014quadratic}),
 and $\delta=\delta_0+\cdots + \delta_{g-1}$ is the total boundary divisor class. From \cites{moller2006variations}[Proof of Prop.~4.2]{chen2014quadratic}, one  has $\mathcal{C} \cdot \eta = -\chi$. By Lemma \ref{lem:Z1class}, we conclude that 
\[ 
\mathcal{C} \cdot \ov{\mathbb{H}}^2_{g, 1} = \mathcal{C} \cdot (2\psi-\eta) = \frac{\chi}{3}. 
\] 
As in the case $k=1$, one argues  that $\chi<0$, hence $\mathcal{C} \cdot \ov{\mathbb{H}}^2_{g, 1} < 0$. Now, select an ample divisor class
\[
A:= c_\eta \eta + c_\lambda \lambda + c_\psi \psi + \sum_{i=0}^{ g-1 } c_i \delta_i \,\,\in\,\, \mathrm{Pic}\left(\P\E_{g,1}^2\right),
\] 
 and let $c_\delta := \max_{\, i=0}^{\, g-1} c_i$. Then $A^+=  c_\eta \eta + c_\lambda \lambda + c_\psi \psi + c_\delta \delta$ is big.
Define $B:=dA^+$ for some sufficiently small  $d>0$ such that $\mathcal{C} \cdot (\ov{\mathbb{H}}^2_{g, 1} +B) \leq 0$ for all $\mathcal{C} \in \mathscr{C}$. Specifically, let 
\[ 
d:= \inf_{\mathcal{C}\in \mathscr{C}} \Bigg\{ \frac{12}{36c_\eta + 12c_\psi + 5(g-1)c_\lambda +c_{\rm{SV}}(\mathcal{C})(18c_\lambda +216c_\delta)} \Bigg\}.
\] 
The expression in the brackets is the value of $d$ such that $\mathcal{C} \cdot (\ov{\mathbb{H}}^2_{g, 1} +dA^+) = 0$, and is positive, since $\mathcal{C} \cdot \ov{\mathbb{H}}^2_{g, 1} < 0$, $\mathcal{C} \cdot A > 0$, and $\mathcal{C} \cdot \delta_i \geq 0$ for all~$i$ (as the general element of $\mathcal{C}$ is a \textit{smooth} pointed curve with quadratic differential), hence $\mathcal{C} \cdot A^+ > 0$.
From  \cite[Thm.~2]{eskin2014sum}, $c_{\rm{SV}}(\mathcal{C})$ is the sum of certain Lyapunov exponents minus a positive constant; in particular,  $c_{\rm{SV}}(\mathcal{C})$ is bounded. It follows that $d$ is positive, and thus $B$ is big. By Lemma \ref{lem:extremality}, $\ov{\mathbb{H}}^2_{g, 1}$ is extremal.

\smallskip

Finally, we prove the  rigidity of $\ov{\mathbb{H}}^k_{g, 1}$ simultaneously for $k\in \{1,2\}$. Suppose that  there exists an effective divisor $D$ such that $D \equiv m\,\ov{\mathbb{H}}^k_{g, 1}$, for some $m>0$. We may assume that $D$ does not contain $\ov{\mathbb{H}}^k_{g, 1}$, since otherwise we could simply consider the divisor $D\setminus\ov{\mathbb{H}}^k_{g, 1}$ and reduce the coefficient $m$. For all Teichm\"uller curves $\mathcal{C}\in\mathscr{C}$ as above, we have that $\mathcal{C} \cdot \ov{\mathbb{H}}^k_{g, 1} < 0$, and thus $\mathcal{C} \cdot D < 0$. This means that $D$ contains the entire collection $\mathscr{C}$ of such curves. Since $\mathscr{C}$ is dense in $\ov{\mathbb{H}}^k_{g, 1}$, we deduce that $D$ contains $\ov{\mathbb{H}}^k_{g, 1}$, which contradicts the assumption. 
\end{proof}

\bibliographystyle{alphanumN}
\bibliography{Biblio}

\end{document}